 \theoremstyle{plain}
\newtheorem{theorem}{Theorem}
\newtheorem{corollary}{Corollary}
\newtheorem{lemma}{Lemma}
\newtheorem{proposition}{Proposition}
\newtheorem{example}{Example}
\theoremstyle{definition}
\newtheorem{definition}{Definition}
\theoremstyle{remark}
\numberwithin{equation}{section}
\newcommand{\bT}{\begin{theorem}}
\newcommand{\eT}{\end{theorem}}
\newcommand{\bProp}{\begin{proposition}}
\newcommand{\eProp}{\end{proposition}}
\newcommand{\bE}{\begin{example}}
\newcommand{\eE}{\end{example}}
\newcommand{\bL}{\begin{lemma}}
\newcommand{\eL}{\end{lemma}}
\newcommand{\bP}{\begin{proof}}
\newcommand{\eP}{\end{proof}}
\newcommand{\bC}{\begin{corollary}}
\newcommand{\eC}{\end{corollary}}
\newcommand{\bD}{\begin{definition}}
\newcommand{\eD}{\end{definition}}
\newcommand{\be}{\begin{enumerate}}
\newcommand{\ee}{\end{enumerate}}
\newcommand{\beqa}{\begin{eqnarray*}}
\newcommand{\eeqa}{\end{eqnarray*}}
\newcommand{\beqaa}{\begin{eqnarray}}
\newcommand{\eeqaa}{\end{eqnarray}}
\newcommand{\ba}{\begin{array}}
\newcommand{\ea}{\end{array}}
\newdimen\plusheight
\def\+{\;\lower\plusheight\hbox{$+$}\;}
\newdimen\minusheight
\def\-{\;\lower\minusheight\hbox{$-$}\;}
\newdimen\cdotsheight
\def\cds{\lower\cdotsheight\hbox{$\cdots$}}
\begin{document}

\title[Some new Transformations for Bailey Pairs ]
       {Some new Transformations for Bailey pairs and WP-Bailey Pairs}

\author{James Mc Laughlin}
\address{Mathematics Department\\
25 University Avenue\\
West Chester University, West Chester, PA 19383}
\email{jmclaughl@wcupa.edu}

%\author{Peter Zimmer}
%\address{Mathematics Department\\
% Anderson Hall\\
%West Chester University, West Chester, PA 19383}
%\email{pzimmer@wcupa.edu}

 \keywords{
 Bailey pairs, WP-Bailey Chains, WP-Bailey pairs, Lambert Series, Basic Hypergeometric Series, q-series, theta series}
 \subjclass[2000]{Primary: 33D15. Secondary:11B65, 05A19.}

\date{\today}

\begin{abstract}
We derive several new transformations relating WP-Bailey pairs. We
also consider the corresponding transformations relating standard Bailey
pairs, and as a consequence, derive some quite general expansions
for products of theta functions which can also be expressed as
certain types of Lambert series.
\end{abstract}

\maketitle

\section{Introduction}

Andrews \cite{A01}, building on previous work of Bressoud
\cite{B81a}
 and Singh \cite{S94}, defined a \emph{WP-Bailey
pair} to be a pair of sequences $(\alpha_{n}(a,k,q)$,
$\beta_{n}(a,k,q))$ (if the context is clear, we occasionally
suppress the dependence on some or all of $a$, $k$ and $q$) satisfying $\alpha_{0}(a,k,q)$
$=\beta_{0}(a,k,q)=1$, and for $n>0$, {\allowdisplaybreaks
\begin{align}\label{WPpair}
\beta_{n}(a,k,q) &= \sum_{j=0}^{n}
\frac{(k/a;q)_{n-j}(k;q)_{n+j}}{(q;q)_{n-j}(aq;q)_{n+j}}\alpha_{j}(a,k,q).
\end{align}
}If $k=0$, then the pair of sequences $(\alpha_n(a,q),\beta_n(a,q))$ is called a \emph{Bailey pair with respect to $a$}.

In the same paper Andrews showed  that if the pair
$(\alpha_{n}(a,k),\,\beta_{n}(a,k))$ satisfies \eqref{WPpair}, then so
does $(\alpha_{n}'(a,k),\,\beta_{n}'(a,k))$  where
{\allowdisplaybreaks
\begin{align}\label{wpn1}
\alpha_{n}'(a,k)&=\frac{(y, z;q)_n}{(aq/y,
aq/z;q)_n}\left(\frac{k}{c}\right)^n\alpha_{n}(a,c),\\
\beta_{n}'(a,k)&=\frac{(k y/a,k z/a;q)_n}{(aq/y,
aq/z;q)_n} \notag\\
%\phantom{as}
\times \sum_{j=0}^{n} &\frac{(1-c
q^{2j})(y,z;q)_j(k/c;q)_{n-j}(k;q)_{n+j}}{(1-c)(ky/a,kz/a;q)_n(q;q)_{n-j}(qc;q)_{n+j}}
\left(\frac{k}{c}\right)^j\beta_{j}(a,c), \notag
\end{align}
}with  $c=ky z/aq$. Andrews \cite{A01} also described  a second method for deriving new WP-Bailey pairs from existing pairs, but this second method will not concern us in the present paper.

 These two constructions allow a ``tree"
of WP-Bailey pairs to be generated from a single WP-Bailey pair.
The implications of these two branches were further investigated by Andrews and Berkovich in \cite{AB02}. Spiridonov
\cite{S02} derived an elliptic generalization of Andrews first
WP-Bailey chain. Four additional branches were added to
the WP-Bailey tree by Warnaar \cite{W03}, two of which had generalizations to the
elliptic level. More recently,  Liu and Ma \cite{LM08} introduced the idea of a general
WP-Bailey chain, and
added one new branch to the WP-Bailey tree. In \cite{MZ09}, the authors added three new WP-Bailey chains.

It is not difficult to show (see Corollary 1 in \cite{MZ09b}, for example) that
the WP-Bailey chain at \eqref{wpn1} implies that if $(\alpha_n, \beta_n)$ satisfy \eqref{WPpair}, then
subject to suitable convergence conditions, {\allowdisplaybreaks
\begin{multline}\label{wpeq}
\sum_{n=0}^{\infty} \frac{(q\sqrt{k},-q\sqrt{k}, y,z;q)_{n}}
{(\sqrt{k},-\sqrt{k}, q k/y,q k/z;q)_{n}}\left( \frac{q a}{y z }\right )^{n} \beta_n =\\
\frac{(q k,q k/yz,q a/y,q a/z;q)_{\infty}} {(q k/y,q k/z,q a,q
a/yz;q)_{\infty}} \sum_{n=0}^{\infty}\frac{(y,z;q)_{n}}{(q a/y ,q
a/z;q)_n}\left (\frac{q a}{y z}\right)^{n}\alpha_n.
\end{multline}
}

%Each of the other WP-Bailey chains implies a similar relation for
%WP-Bailey pairs, but these will not concern us in the present paper.

In the present paper we prove some new relations for WP-Bailey
pairs. These include the following.

\begin{theorem}\label{t1}
If $(\alpha_n(a,k), \beta_n(a,k))$ is a WP-Bailey pair, then subject
to suitable convergence conditions, {\allowdisplaybreaks
\begin{multline}\label{wpeq8}
\sum_{n=1}^{\infty} \frac{(q\sqrt{k},-q\sqrt{k},z;q)_{n}(q;q)_{n-1}}
{\left(\sqrt{k},-\sqrt{k}, q k,\frac{q k}{z};q\right)_{n}}\left( \frac{q a}{ z }\right )^{n} \beta_n(a,k)\\
- \sum_{n=1}^{\infty}
\frac{\left(q\sqrt{\frac{1}{k}},-q\sqrt{\frac{1}{k}},\frac{1}{z};q\right)_{n}(q;q)_{n-1}}
{\left(\sqrt{\frac{1}{k}},-\sqrt{\frac{1}{k}}, \frac{q }{k},\frac{q
z}{k};q\right)_{n}}\left( \frac{q z}{ a }\right )^{n}
\beta_n\left(\frac{1}{a},\frac{1}{k}\right) -\\
 \sum_{n=1}^{\infty}\frac{(z;q)_{n}(q;q)_{n-1}}{\left(q a ,\frac{q
a}{z};q\right)_n}\left (\frac{q a}{z}\right)^{n}\alpha_n(a,k)
%\\
+
\sum_{n=1}^{\infty}\frac{\left(\frac{1}{z};q\right)_{n}(q;q)_{n-1}}{\left(\frac{q}{a}
,\frac{q z}{a};q\right)_n}\left (\frac{q
z}{a}\right)^{n}\alpha_n\left(\frac{1}{a},\frac{1}{k}\right)\\=
\frac{(a-k)\left(1-\frac{1}{z}\right)\left(1-\frac{ak}{z}\right)}
{(1-a)(1-k)\left(1-\frac{a}{z}\right)\left(1-\frac{k}{z}\right)}
+\frac{z}{k}\frac{\left(z,\frac{q}{z},\frac{k}{a},
\frac{qa}{k},\frac{ak}{z},\frac{qz}{ak},q,q;q\right)_{\infty}}
{\left(\frac{z}{k},\frac{qk}{z},\frac{z}{a},
\frac{qa}{z},a,\frac{q}{a},k,\frac{q}{k};q\right)_{\infty}}.
\end{multline}
}
\end{theorem}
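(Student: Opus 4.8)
The plan is to obtain \eqref{wpeq8} by differentiating \eqref{wpeq} with respect to $y$ and evaluating at $y=1$, and then applying the resulting relation twice: once with the given parameters and once with $a,k,z$ replaced by $1/a,1/k,1/z$. First I observe that at $y=1$ both sides of \eqref{wpeq} collapse to their $n=0$ term and hence both equal $1$ (since $\alpha_0=\beta_0=1$ and the infinite-product prefactor reduces to $1$); thus \eqref{wpeq} is trivial at $y=1$ and its content lies one order deeper. Writing $(y;q)_n=(1-y)(yq;q)_{n-1}$ gives $\frac{d}{dy}(y;q)_n\big|_{y=1}=-(q;q)_{n-1}$ for $n\ge1$, so termwise differentiation at $y=1$ annihilates every factor in each summand except the $(y;q)_n$, which it replaces by $-(q;q)_{n-1}$. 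This is exactly the mechanism that produces the $(q;q)_{n-1}$ weights and the ranges $n\ge1$ appearing in \eqref{wpeq8}.

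Carrying this out, the $\beta$-side of \eqref{wpeq} yields $-S_\beta(a,k)$, where $S_\beta(a,k)$ is the first sum in \eqref{wpeq8}. On the $\alpha$-side I must use the product rule, since the prefactor $P(y)=\frac{(qk,qk/(yz),qa/y,qa/z;q)_\infty}{(qk/y,qk/z,qa,qa/(yz);q)_\infty}$ also depends on $y$; because $P(1)=1$ and the $\alpha$-sum equals $1$ at $y=1$, the derivative of the $\alpha$-side is $P'(1)-S_\alpha(a,k)$, where $S_\alpha(a,k)$ is the third sum in \eqref{wpeq8}. Equating the two derivatives gives $S_\beta(a,k)-S_\alpha(a,k)=-P'(1)$. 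Repeating the whole computation under $(a,k,z)\mapsto(1/a,1/k,1/z)$ is legitimate, since \eqref{WPpair} is invariant under $a\mapsto1/a,\ k\mapsto1/k$, so $(\alpha_n(1/a,1/k),\beta_n(1/a,1/k))$ is again a WP-Bailey pair; this produces the second and fourth sums of \eqref{wpeq8} together with $S_\beta(1/a,1/k)-S_\alpha(1/a,1/k)=-\tilde P'(1)$, where $\tilde P$ is $P$ under the same substitution. Subtracting, the entire left-hand side of \eqref{wpeq8} telescopes to the pair-independent quantity $\tilde P'(1)-P'(1)$.

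It then remains to evaluate $\tilde P'(1)-P'(1)$ in closed form and match it to the right-hand side of \eqref{wpeq8}. Logarithmic differentiation of the infinite products expresses each contribution as a Lambert series: with $L(c):=\sum_{m\ge0}\frac{cq^m}{1-cq^m}$ one finds $P'(1)=L(qk/z)+L(qa)-L(qk)-L(qa/z)$ and $\tilde P'(1)=L(qz/k)+L(q/a)-L(q/k)-L(qz/a)$, so the left-hand side of \eqref{wpeq8} equals an explicit eight-term combination of Lambert series. The outstanding task is the purely scalar identity asserting that this combination equals the rational term plus the theta quotient on the right of \eqref{wpeq8}.

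I expect this last identity to be the main obstacle, and I would prove it by the elliptic-function method, viewing both sides as functions of $z$. The theta quotient (including the factor $z/k$) is genuinely $q$-periodic: using $\theta(qx)=-x^{-1}\theta(x)$ for $\theta(x)=(x;q)_\infty(q/x;q)_\infty$, a short computation shows its multiplier under $z\mapsto qz$ is $1$. The Lambert-series combination is only additively quasi-periodic, since $L(qc)=L(c)-\frac{c}{1-c}$, but its poles (simple, at $z=aq^n$ and $z=kq^n$ for $n\ne0$) coincide with those of the theta quotient there, while the extra poles of the theta quotient at $z=a$ and $z=k$ are cancelled exactly by the rational term — near $z=a$ the theta quotient contributes $a/(z-a)$ and the rational term $-a/(z-a)$. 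Hence the difference of the two sides is pole-free, its additive quasi-periods cancel against $\rho(qz)-\rho(z)$ for the rational term $\rho$, and what survives is a holomorphic $q$-periodic function, so a constant by Liouville; evaluating at one convenient point (or comparing a single Laurent coefficient) forces that constant to be $0$. The delicate parts will be the residue bookkeeping at $z=aq^n$ and $z=kq^n$ and the verification that the rational shift $\rho(qz)-\rho(z)$ matches the additive quasi-period of the Lambert series, both of which reduce to finite rational-function checks.
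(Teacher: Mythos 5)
Your argument is correct, and its reduction step is essentially the paper's own: the paper's Lemma \ref{p1} produces the $(q;q)_{n-1}$ weights by subtracting $1$ from both sides of \eqref{wpeq}, dividing by $1-y$, and letting $y\to 1$, which is exactly your derivative at $y=1$, and its Lemma \ref{l2} computes the product's contribution $-G'(1)$ by logarithmic differentiation into the same four Lambert series you obtain (you merely bypass the intermediate ${}_6\phi_5$-series form $f(a,k,z,q)$ appearing on the right of \eqref{wpeq2}). The genuine divergence is in the key lemma evaluating the eight-term Lambert combination $\tilde P'(1)-P'(1)$. The paper (the lemma yielding \eqref{wpeq7}) does this hypergeometrically: a partial-fraction identity rewrites each four-term Lambert block as a very-well-poised unilateral series, \eqref{qneg} splices the block for $(a,k,z)$ and the block for $(1/a,1/k,1/z)$ into a single bilateral series, and Bailey's ${}_6\psi_6$ summation \eqref{baileyeq1}, with $b=a$, $c=k$, $d=a/z$, $e=k/z$ and $ak/z$ in place of $a$, yields the rational term plus the theta quotient. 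You instead prove this scalar identity by Liouville's theorem on $\mathbb{C}^{*}/q^{\mathbb{Z}}$, and your supporting claims all check out: the theta quotient (with its prefactor $z/k$) does have multiplier $1$ under $z\mapsto qz$; the rational term has residue $-a$ at $z=a$ while the theta quotient has residue $+a$ there (the case $z=k$ follows from the $a\leftrightarrow k$ antisymmetry of all three pieces); the additive quasi-period of the Lambert block does equal $\rho(qz)-\rho(z)$; and $z=1$ is a convenient evaluation point, since the Lambert combination, the rational term, and the theta quotient all vanish there, forcing the constant to be $0$. As for what each route buys: the paper's leans on the ready-made ${}_6\psi_6$ formula and yields the byproduct series identity \eqref{wpeq7a}, whereas yours is independent of \eqref{baileyeq1} --- in effect you re-prove, by the classical elliptic-function method, the Andrews--Lewis--Liu identity that the paper's remark after \eqref{wpeq7} attributes to \cite{ALL01} --- at the price of the finite residue and periodicity verifications, which you correctly identify and which do go through.
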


\begin{theorem}\label{c3}
If $(\alpha_n(a,k,q), \beta_n(a,k,q))$ is a WP-Bailey pair, then subject
to suitable convergence conditions, {\allowdisplaybreaks
\begin{multline}\label{wpeq2n}
\sum_{n=1}^{\infty} \frac{(1-k q^{2n})(z;q)_{n}(q;q)_{n-1}}
{(1-k)( q k,q k/z;q)_{n}}\left( \frac{q a}{ z }\right )^{n} \beta_n(a,k,q)\\
+  \sum_{n=1}^{\infty} \frac{(1+k q^{2n})(z;q)_{n}(q;q)_{n-1}}
{(1+k)( -q k,-q k/z;q)_{n}}\left( \frac{-q a}{ z }\right )^{n}
\beta_n(-a,-k,q)\\-2 \sum_{n=1}^{\infty} \frac{(1-k^2
q^{4n})(z^2;q^2)_{n}(q^2;q^2)_{n-1}} {(1-k^2)( q^2 k^2,q^2
k^2/z^2;q^2)_{n}}\left( \frac{q^2 a^2}{ z^2 }\right )^{n}
\beta_n(a^2,k^2,q^2)\\=
\sum_{n=1}^{\infty}\frac{(z;q)_{n}(q;q)_{n-1}}{(q a ,q
a/z;q)_n}\left (\frac{q a}{z}\right)^{n}\alpha_n(a,k,q)\\ +
\sum_{n=1}^{\infty}\frac{(z;q)_{n}(q;q)_{n-1}}{(-q a ,-q
a/z;q)_n}\left (\frac{-q a}{z}\right)^{n}\alpha_n(-a,-k,q)\\-2
\sum_{n=1}^{\infty}\frac{(z^2;q^2)_{n}(q^2;q^2)_{n-1}}{(q^2 a^2 ,q^2
a^2/z^2;q^2)_n}\left (\frac{q^2
a^2}{z^2}\right)^{n}\alpha_n(a^2,k^2,q^2).
\end{multline}
}
\end{theorem}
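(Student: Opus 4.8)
The plan is to derive \eqref{wpeq2n} by applying one limiting operation to the fundamental relation \eqref{wpeq} and then superposing three instances of the resulting identity. First I would rewrite the very-well-poised factor in \eqref{wpeq} using $\frac{(q\sqrt{k},-q\sqrt{k};q)_n}{(\sqrt{k},-\sqrt{k};q)_n}=\frac{1-kq^{2n}}{1-k}$, so that both sides become explicit functions of $y$ whose $n=0$ term equals $1$. Writing $P(y)$ for the infinite-product prefactor on the right and $S(y)$ for the accompanying $\alpha$-sum, one has $P(1)=S(1)=1$, and every term with $n\ge 1$ carries a factor $(y;q)_n=(1-y)(qy;q)_{n-1}$. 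Applying the operator $L[H]:=\lim_{y\to1}\frac{H(y)-H(1)}{1-y}$ to \eqref{wpeq} therefore replaces $(y;q)_n$ by $(q;q)_{n-1}$ and sets $y=1$ everywhere else, producing the base identity
\[
\sum_{n=1}^{\infty}\frac{(1-kq^{2n})(z;q)_n(q;q)_{n-1}}{(1-k)(qk,qk/z;q)_n}\Big(\frac{qa}{z}\Big)^n\beta_n(a,k,q)=\sum_{n=1}^{\infty}\frac{(z;q)_n(q;q)_{n-1}}{(qa,qa/z;q)_n}\Big(\frac{qa}{z}\Big)^n\alpha_n(a,k,q)+E(a,k,q),
\]
where $E(a,k,q):=L[P]$ is the contribution of the prefactor. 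This is the same limiting mechanism that underlies Theorem~\ref{t1}.

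Since $P(1)=1$, the remainder is $E(a,k,q)=-\big(\ln P\big)'(1)$, and differentiating each factor of the form $(\lambda/y;q)_\infty$ via $\frac{d}{dy}\ln(\lambda/y;q)_\infty\big|_{y=1}=\sum_{j\ge0}\frac{\lambda q^j}{1-\lambda q^j}=\sum_{m\ge1}\frac{\lambda^m}{1-q^m}$ collapses the four $y$-dependent products into the Lambert series
\[
E(a,k,q)=\sum_{m=1}^{\infty}\frac{q^m}{1-q^m}\,(1-z^{-m})(k^m-a^m).
\]
The clean evaluation of $E$ as this Lambert series is the only analytically delicate point, and it is the same computation that, for two \emph{reciprocal} instances, yields the theta quotient on the right of \eqref{wpeq8}.

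With the base identity in hand, \eqref{wpeq2n} follows by forming the combination of the base identity at $(a,k,q,z)$, plus the base identity at $(-a,-k,q,z)$, minus twice the base identity at $(a^2,k^2,q^2,z^2)$. Each substitution is legitimate because \eqref{WPpair}, and hence \eqref{wpeq} and the base identity, hold as formal identities in the parameters: replacing $(a,k)$ by $(-a,-k)$ leaves the base $q$ intact and sends $\frac{1-kq^{2n}}{1-k}\mapsto\frac{1+kq^{2n}}{1+k}$, $(qk;q)_n\mapsto(-qk;q)_n$, and so on, reproducing the second summands of \eqref{wpeq2n}; while $(a,k,q,z)\mapsto(a^2,k^2,q^2,z^2)$ reproduces the third, the pair $(\alpha_n(a^2,k^2,q^2),\beta_n(a^2,k^2,q^2))$ being a WP-Bailey pair with base $q^2$. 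The $\beta$-sums then assemble into the left-hand side of \eqref{wpeq2n} and the $\alpha$-sums into the right-hand side, leaving only the three remainder terms to reconcile.

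The cancellation is immediate from the Lambert form: $E(-a,-k,q)$ differs from $E(a,k,q)$ only by the factor $(-1)^m$ inside the sum, so that
\[
E(a,k,q)+E(-a,-k,q)=2\sum_{m\ge1}\frac{q^{2m}}{1-q^{2m}}(1-z^{-2m})(k^{2m}-a^{2m})=2E(a^2,k^2,q^2),
\]
the middle step because $1+(-1)^m$ annihilates the odd $m$ and doubles the even ones. Hence $E(a,k,q)+E(-a,-k,q)-2E(a^2,k^2,q^2)=0$, and \eqref{wpeq2n} follows. The main obstacle is thus the evaluation of the prefactor limit $E(a,k,q)$ together with the justification of interchanging the limit $y\to1$ with the conditionally convergent sums; once $E$ is in Lambert form the three-fold cancellation is a routine even/odd splitting, which is precisely why this identity is simpler than the theta-valued Theorem~\ref{t1}.
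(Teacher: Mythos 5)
Your proposal is correct and follows essentially the same route as the paper: the base identity obtained by the $y\to1$ limit is the paper's Lemma~\ref{p1}, your Lambert-series evaluation of $E(a,k,q)=-(\ln P)'(1)$ is the paper's Lemma~\ref{l2} (your $E(a,k,q)$ is the paper's $f(a,k,z,q)$, just expanded into the single power-sum form $\sum_m \frac{q^m}{1-q^m}(k^m-a^m)(1-z^{-m})$ rather than kept as four Lambert series), and your even/odd cancellation $E(a,k,q)+E(-a,-k,q)=2E(a^2,k^2,q^2)$ is the paper's Lemma~\ref{l3}, after which the superposition step is identical. The only cosmetic difference is that the paper combines the Lambert series pairwise via the identity $\frac{xq^n}{1-xq^n}+\frac{-xq^n}{1+xq^n}=\frac{2x^2q^{2n}}{1-x^2q^{2n}}$ instead of splitting the power-sum index by parity, which amounts to the same computation.
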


We find  some similar relations for standard Bailey pairs and derive
some interesting consequences. For example, recall that  $\psi(q)=\sum_{n=0}^{\infty}q^{n(n+1)/2}$ \\ $=(q^2;q^2)_{\infty}/(q;q^2)_{\infty}$ is Ramanujan's theta function (see \cite[page 36]{B91}, for example).  If $d \not=q^{3n\pm1}$,
then {\allowdisplaybreaks
\begin{multline*}
q\frac{\psi^3(q^3)}{\psi(q)}=
\sum_{n=1}^{\infty}\frac{(q^6;q^6)_{n-1}(q^2/d;q^6)_n(-q^2)^n }
{(q^2;q^6)_n(q^2/d,q^3;q^3)_n}\\
-\sum_{n=1}^{\infty}\frac{(q^6;q^6)_{n-1}(q/d;q^6)_n(-q)^n }
{(q;q^6)_n(q/d,q^3;q^3)_n}\\
+\sum_{n=1}^{\infty}\frac{(1-q^{12n-2})(q^6;q^6)_{2n-1}(1/q^2,d;q^6)_n q^{6n^2} }{(1-1/q^2)(q^2;q^6)_{2n}(q^4/d,q^6;q^6)_n d^n}\\
-\sum_{n=1}^{\infty}\frac{(1-q^{12n-1})(q^6;q^6)_{2n-1}(1/q,d;q^6)_n
q^{6n^2+3n} }{(1-1/q)(q^4;q^6)_{2n}(q^5/d,q^6;q^6)_n d^n}.
\end{multline*}}
We show that similar
results hold for many other theta products.

We use the standard notations: {\allowdisplaybreaks
\begin{align*}
           (a;q)_n &:= (1-a)(1-aq)\cdots (1-aq^{n-1}), \\
          (a_1, a_2, \dots, a_j; q)_n &:= (a_1;q)_n (a_2;q)_n \cdots (a_j;q)_n ,\\
           (a;q)_\infty &:= (1-a)(1-aq)(1-aq^2)\cdots, \mbox{ and }\\
          (a_1, a_2, \dots, a_j; q)_\infty &:= (a_1;q)_\infty (a_2;q)_\infty \cdots (a_j;q)_\infty.
\end{align*}}

We will make use of Bailey's $\,_{6}\psi_6$ summation formula
\cite{W36}. {\allowdisplaybreaks
\begin{multline}\label{baileyeq1}
\frac{ (aq,aq/bc,aq/bd,aq/be,aq/cd,aq/ce,aq/de,q,q/a;q)_{\infty} } {
(aq/b,aq/c,aq/d,aq/e,q/b,q/c,q/d,q/e,qa^2/bcde;q)_{\infty} }\\ =
\sum_{n=-\infty}^{\infty} \frac{(1-a q^{2n}) (b,c,d,e;q)_{n}}
{(1-a)(aq/b,aq/c,aq/d,aq/e;q)_{n}} \left( \frac{q
a^2}{b c d e}\right)^n\\
=1+\sum_{n=1}^{\infty} \frac{(1-a q^{2n}) (b,c,d,e;q)_{n}}
{(1-a)(aq/b,aq/c,aq/d,aq/e;q)_{n}} \left( \frac{q a^2}{b c d
e}\right)^n\\
+\sum_{n=1}^{\infty}\frac{(1-1/a q^{2n}) (b/a,c/a,d/a,e/a;q)_{n}}
{(1-1/a)(q/b,q/c,q/d,q/e;q)_{n}} \left( \frac{q a^2}{b c d
e}\right)^n,
\end{multline}
} where the second equality follows from the definition
\begin{equation}\label{qneg}
(z;q)_{-n}= \frac{ (-1)^n q ^{n(n+1)/2} }
                 { z^n(q/z;q)_n}.
\end{equation}

We also recall Jackson's summation formula for a very-well-poised $_6\phi_5$
series~\cite[p. 356, Eq. (II. 20)]{GR04} (which follows upon setting $e=a$ in \eqref{baileyeq1}):
\begin{equation}\label{6phi5}
\sum_{n=0}^{\infty}\frac{(a,q\sqrt{a},-q\sqrt{a},b,c,d;q)_n}
{\left(q,\sqrt{a},-\sqrt{a},\frac{aq}{b},\frac{aq}{c},\frac{aq}{d};q\right)_n}
\left(\frac{aq}{bcd}\right)^n=\frac{(aq,aq/bc,aq/bd,aq/cd;q)_\infty}{(aq/b,aq/c,aq/d,aq/bcd;q)_\infty}.
\end{equation}

Finally, we make use of the $q$-Binomial Theorem \cite[page 8]{GR04},
\begin{equation}\label{qbintheo}
\sum_{n=0}^{\infty}\frac{(a;q)_n}{(q;q)_n}z^n=\frac{(az;q)_{\infty}}{(z;q)_{\infty}}.
\end{equation}
For later use, we note the special cases
\begin{align}\label{bineqab}
\sum_{n=0}^{\infty}\frac{z^n}{(q;q)_n}&=\frac{1}{(z;q)_{\infty}},\\
\sum_{n=0}^{\infty}\frac{q^{n(n-1)/2}(-z)^n}{(q;q)_n}&=(z;q)_{\infty},\notag
\end{align}
which following respectively from setting $a=0$, and replacing $z$ with $z/a$ and then letting $a\to \infty$.
Unless stated otherwise, we assume $|q|<1$.

\section{Proofs of the Main Identities}

The next transformation follows easily from the identity at
\eqref{wpeq}.

\begin{lemma}\label{p1}
If $(\alpha_n(a,k,q), \beta_n(a,k,q))$ is a WP-Bailey pair, then subject to
suitable convergence conditions, {\allowdisplaybreaks
\begin{multline}\label{wpeq2}
\sum_{n=1}^{\infty} \frac{(q\sqrt{k},-q\sqrt{k},z;q)_{n}(q;q)_{n-1}}
{(\sqrt{k},-\sqrt{k}, q k,q k/z;q)_{n}}\left( \frac{q a}{ z }\right
)^{n} \beta_n(a,k,q)
\\
- \sum_{n=1}^{\infty}\frac{(z;q)_{n}(q;q)_{n-1}}{(q a ,q
a/z;q)_n}\left (\frac{q a}{z}\right)^{n}\alpha_n(a,k,q)\\=
\sum_{n=1}^{\infty} \frac{(q\sqrt{k},-q\sqrt{k},k,z,k/a;q)_{n}}
{(\sqrt{k},-\sqrt{k}, q k,q k/z,q a;q)_{n}(1-q^n)}\left( \frac{q a}{
z }\right )^{n}.
\end{multline}
}
\end{lemma}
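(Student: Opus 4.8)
The plan is to obtain \eqref{wpeq2} from the expansion \eqref{wpeq} by letting the free parameter $y$ tend to $1$, having first subtracted the contribution of the trivial WP-Bailey pair so as to avoid having to differentiate the infinite product prefactor.

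First I would record the \emph{unit} pair $\alpha_n^{u}=\delta_{n,0}$, for which \eqref{WPpair} forces $\beta_n^{u}=\frac{(k/a;q)_n(k;q)_n}{(q;q)_n(aq;q)_n}$; this is a genuine WP-Bailey pair since $\alpha_0^{u}=\beta_0^{u}=1$. I would then write \eqref{wpeq} once for the given pair $(\alpha_n,\beta_n)$ and once for $(\alpha_n^{u},\beta_n^{u})$, and subtract. Writing $P(y)=\frac{(qk,qk/yz,qa/y,qa/z;q)_\infty}{(qk/y,qk/z,qa,qa/yz;q)_\infty}$ for the common prefactor, the $n=0$ terms (each equal to $1$) cancel on both sides, so both sides now run over $n\ge1$. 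Crucially, since $\alpha_n^{u}=0$ for $n\ge1$, the right-hand side of the difference collapses to $P(y)\sum_{n\ge1}\frac{(y,z;q)_n}{(qa/y,qa/z;q)_n}\bigl(\frac{qa}{yz}\bigr)^n\alpha_n$, with no residual product term surviving.

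Each summand on both sides now carries the factor $(y;q)_n$, which vanishes at $y=1$. I would divide through by $1-y$, using $\frac{(y;q)_n}{1-y}=(yq;q)_{n-1}$, and then let $y\to1$ term by term; note $P(y)\to P(1)=1$ and $(yq;q)_{n-1}\to(q;q)_{n-1}$. The $\beta$-side becomes $\sum_{n\ge1}\frac{(q\sqrt{k},-q\sqrt{k},z;q)_n(q;q)_{n-1}}{(\sqrt{k},-\sqrt{k},qk,qk/z;q)_n}\bigl(\frac{qa}{z}\bigr)^n(\beta_n-\beta_n^{u})$ and the $\alpha$-side becomes $\sum_{n\ge1}\frac{(z;q)_n(q;q)_{n-1}}{(qa,qa/z;q)_n}\bigl(\frac{qa}{z}\bigr)^n\alpha_n$. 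Equating the two and moving the $\beta_n^{u}$ piece to the right gives exactly \eqref{wpeq2} once I identify the right-hand side: substituting $\beta_n^{u}$ and using $(q;q)_{n-1}/(q;q)_n=1/(1-q^n)$ turns the $\beta^{u}$-sum into $\sum_{n\ge1}\frac{(q\sqrt{k},-q\sqrt{k},z,k/a,k;q)_n}{(\sqrt{k},-\sqrt{k},qk,qk/z,qa;q)_n(1-q^n)}\bigl(\frac{qa}{z}\bigr)^n$, which is precisely the claimed right-hand side.

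The one genuine subtlety, rather than any hard computation, is justifying the term-by-term passage to the limit $y\to1$ (and its interchange with the infinite summation), which is where the unstated ``suitable convergence conditions'' enter. The conceptual key is the decision to subtract the unit pair at the outset: doing so replaces the awkward constant $\lim_{y\to1}\frac{P(y)-1}{1-y}$ — which on its own would unfold into a combination of Lambert-type series $\sum_m cq^m/(1-cq^m)$ — by the closed hypergeometric sum $\sum_{n\ge1}(\cdots)\beta_n^{u}$, so that the product $P(y)$ only ever multiplies sums already divisible by $1-y$ and thus limits cleanly to $1$.
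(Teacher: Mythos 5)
Your proof is correct, and its core mechanism is the same as the paper's: divide \eqref{wpeq} through by $1-y$, use $(y;q)_n/(1-y)=(yq;q)_{n-1}$, and let $y\to1$ term by term, so that the only difficulty is the term $\lim_{y\to1}(P(y)-1)/(1-y)$, where $P(y)=(qk,qk/yz,qa/y,qa/z;q)_\infty/(qk/y,qk/z,qa,qa/yz;q)_\infty$. Where you genuinely differ is in how that term is handled. The paper invokes Jackson's ${}_6\phi_5$ summation \eqref{6phi5} to expand $P(y)$ as the series \eqref{prod6phi5}, whose $n\ge1$ terms all carry the factor $(y;q)_n$, and then divides and passes to the limit. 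You instead apply \eqref{wpeq} to the unit WP-Bailey pair $\alpha_n^u=\delta_{n,0}$, $\beta_n^u=(k,k/a;q)_n/(q,aq;q)_n$, and subtract before dividing, so that $P(y)$ only ever multiplies series already divisible by $1-y$. These are two packagings of one and the same identity: \eqref{wpeq} specialized to your unit pair \emph{is} \eqref{prod6phi5} --- its $\beta$-side is exactly the ${}_6\phi_5$ series and its $\alpha$-side collapses to $P(y)\cdot 1$. Your packaging buys self-containment (no appeal to Jackson's formula; the needed expansion is generated from \eqref{wpeq} itself) and makes it structurally transparent why no derivative of $P(y)$ ever arises --- contrast Lemma \ref{l2}, where the paper computes precisely that limit as $-G'(1)$ by logarithmic differentiation to obtain the Lambert-series form of $f(a,k,z,q)$. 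The paper's packaging buys the standalone expansion \eqref{prod6phi5}, which it reuses later (e.g.\ for the reciprocity observation \eqref{wpeq4}). On the analytic point you flag --- the interchange of the limit $y\to1$ with the infinite sums --- both proofs are equally informal: that is exactly what the hypothesis of ``suitable convergence conditions'' is covering.
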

\begin{proof}
Rewrite \eqref{wpeq} as
{\allowdisplaybreaks
\begin{multline}\label{wpeqa}
\sum_{n=1}^{\infty} \frac{(q\sqrt{k},-q\sqrt{k},
z;q)_{n}(yq;q)_{n-1}} {(\sqrt{k},-\sqrt{k}, q k/y,q
k/z;q)_{n}}\left( \frac{q a}{y z }\right )^{n} \beta_n\\- \frac{(q
k,q k/yz,q a/y,q a/z;q)_{\infty}} {(q k/y,q k/z,q a,q
a/yz;q)_{\infty}} \sum_{n=1}^{\infty}\frac{(z;q)_{n}(yq;q)_{n-1}}{(q
a/y ,q
a/z;q)_n}\left (\frac{q a}{y z}\right)^{n}\alpha_n =\\
\frac{1}{1-y} \left ( \frac{(q k,q k/yz,q a/y,q a/z;q)_{\infty}} {(q
k/y,q k/z,q a,q a/yz;q)_{\infty}} - 1\right).
\end{multline}
}
The left side of \eqref{wpeq2} follows upon letting $y\to 1$ on the left side of \eqref{wpeqa}.
From \eqref{6phi5} it can be seen that
\begin{equation}
\label{prod6phi5}
\frac{(q k,q k/yz,q a/y,q a/z;q)_{\infty}} {(q
k/y,q k/z,q a,q a/yz;q)_{\infty}} = \sum_{n=0}^{\infty}
\frac{(q\sqrt{k},-q\sqrt{k},k,y,z,k/a;q)_{n}} {(\sqrt{k},-\sqrt{k},
q k/y,q k/z,q a,q;q)_{n}}\left( \frac{q a}{y z }\right )^{n}.
\end{equation}
Upon making this substitution in the right side of \eqref{wpeqa}, we get the right side of  \eqref{wpeq2}, after setting $(y;q)_n/(1-y)=(yq;q)_{n-1}$, then  letting $y \to 1$ and finally setting $(q;q)_{n-1}/(q;q)_n =1/(1-q^n)$.
\end{proof}

For later use we note that the  series on the right side of \eqref{wpeq2} has
the following properties. We define
\begin{equation}\label{fakzqdef}
f(a,k,z,q):=\sum_{n=1}^{\infty}
\frac{(q\sqrt{k},-q\sqrt{k},k,z,k/a;q)_{n}} {(\sqrt{k},-\sqrt{k}, q
k,q k/z,q a;q)_{n}(1-q^n)}\left( \frac{q a}{ z }\right )^{n}.
\end{equation}
\begin{lemma}\label{l1}
If $f(a,k,z,q)$ is as defined at \eqref{fakzqdef}, $|q a|, |q k| <|z|$ and none of the denominators below vanish, then
\begin{equation}\label{wpeq3}
f(a,k,z,q) = -f(k,a,z,q).
\end{equation}
\end{lemma}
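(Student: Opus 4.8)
The plan is to exploit the product representation of $f$ that is already implicit in the proof of Lemma \ref{p1}, rather than to manipulate the defining series \eqref{fakzqdef} directly. Set
\[
P(y) := \frac{(qk,\,qk/yz,\,qa/y,\,qa/z;q)_\infty}{(qk/y,\,qk/z,\,qa,\,qa/yz;q)_\infty},
\]
so that the right side of \eqref{wpeqa} is exactly $(P(y)-1)/(1-y)$. The proof of Lemma \ref{p1} shows that letting $y\to 1$ term-by-term in this quotient produces precisely the series \eqref{fakzqdef}; in other words,
\[
f(a,k,z,q) = \lim_{y\to 1}\frac{P(y)-1}{1-y}.
\]
The entire argument will hinge on reading the same limit off for $f(k,a,z,q)$.

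First I would record what happens to $P(y)$ under the interchange $a\leftrightarrow k$. Writing $Q(y)$ for the product obtained from $P(y)$ by swapping $a$ and $k$ throughout, a direct inspection of the eight $q$-shifted factorial factors shows that the four numerator factors of $Q(y)$, namely $(qa,\,qa/yz,\,qk/y,\,qk/z;q)_\infty$, are exactly the four denominator factors of $P(y)$, and symmetrically the denominator of $Q(y)$ matches the numerator of $P(y)$. Hence
\[
Q(y) = \frac{1}{P(y)}.
\]
This is the one genuinely structural observation in the proof, and it is purely formal, requiring no summation formula.

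With this in hand the conclusion is immediate. Applying the same limiting procedure to the pair $(k,a)$ and then substituting $Q(y)=1/P(y)$ gives
\[
f(k,a,z,q) = \lim_{y\to 1}\frac{Q(y)-1}{1-y} = \lim_{y\to 1}\frac{1-P(y)}{(1-y)\,P(y)}.
\]
Since $P(1)=1$, the factor $1/P(y)\to 1$, and the remaining limit is $\lim_{y\to 1}(1-P(y))/(1-y) = -f(a,k,z,q)$, which is exactly \eqref{wpeq3}.

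The main obstacle I anticipate is not the algebra, which is very short, but the analytic bookkeeping: one must confirm that the $y\to 1$ limit genuinely commutes with the summation defining $f(k,a,z,q)$, and that $P(y)$ stays away from its poles and zeros as $y\to 1$. Both are governed by the stated hypotheses, namely $|qa|,|qk|<|z|$ for convergence and the non-vanishing of the denominators for analyticity near $y=1$, in precisely the same way as in the proof of Lemma \ref{p1}. I would therefore invoke that lemma's justification rather than reproduce it. As an independent check one could instead differentiate $\log P(y)$ at $y=1$ to obtain the Lambert-type series representation of $f$, in which the antisymmetry under $a\leftrightarrow k$ becomes manifest term by term; but the reciprocal relation $Q=1/P$ yields the result more cleanly.
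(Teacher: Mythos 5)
Your proof is correct and is essentially identical to the paper's own argument: the paper proves the lemma by writing $\frac{P(y)-1}{1-y} = \frac{1}{1-y}\left(1-\frac{1}{P(y)}\right)P(y)$, observing that $1/P(y)$ is exactly $P(y)$ with $a$ and $k$ interchanged, and letting $y\to 1$, which is your reciprocity observation $Q=1/P$ read in the other direction. Nothing further is needed.
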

\begin{proof}
This follows easily upon writing
\begin{multline*}
\frac{1}{1-y} \left ( \frac{(q k,q k/yz,q a/y,q a/z;q)_{\infty}} {(q k/y,q k/z,q a,q
a/yz;q)_{\infty}} - 1\right)\\ = \frac{1}{1-y} \left ( 1 - \frac{(q a,q
a/yz,q k/y,q k/z;q)_{\infty}}{(q a/y,q a/z,q k,q k/yz;q)_{\infty}} \right)\frac{(q k,q k/yz,q a/y,q a/z;q)_{\infty}} {(q k/y,q k/z,q a,q
a/yz;q)_{\infty}}.
\end{multline*}
From the proof of Lemma \ref{p1}, it can be see that the result of letting $y\to 1$ on the left side above is $f(a,k,z,q)$. On the other hand, the infinite product following the ``$-$" sign on the right side above is the product on the left side above with $a$ and $k$ interchanged, so that the result of  letting $y \to 1$ on the right side is $-f(k,a,z,q)$.
\end{proof}

We remark in passing that the expansion at \eqref{prod6phi5} and the similar expansion of the reciprocal of this product imply that if
\[
g(a,k,y,z,q):=\sum_{n=0}^{\infty} \frac{(q\sqrt{k},-q\sqrt{k},k,y,z,k/a;q)_{n}}
{(\sqrt{k},-\sqrt{k}, q k/y,q k/z,q a,q;q)_{n}}\left( \frac{q a}{y z }\right )^{n},
\]
then
\begin{equation}\label{wpeq4}
g(a,k,y,z,q)=\frac{1}{g(k,a,y,z,q)}.
\end{equation}

We next express $f(a,k,z,q)$ as a sum of Lambert series.

\begin{lemma}\label{l2}
If $f(a,k,z,q)$ is as defined at \eqref{fakzqdef}, $|q a| <|z|$ and  none of the denominators  below vanish, then
\begin{equation}\label{wpeq5}
f(a,k,z,q)
=
\sum_{n=1}^{\infty}\frac{k q^n}{1-kq^n}+ \sum_{n=1}^{\infty}\frac{ q^n a/z}{1-q^n a/z}-
\sum_{n=1}^{\infty}\frac{a q^n}{1-aq^n}- \sum_{n=1}^{\infty}\frac{ q^n k/z}{1-q^n k/z}.
\end{equation}
\end{lemma}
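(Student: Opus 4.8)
The plan is to recover $f(a,k,z,q)$ not from its series \eqref{fakzqdef}, but from the infinite-product expression that produced it in the proof of Lemma \ref{p1}. Set
\[
P(y):=\frac{(q k,q k/yz,q a/y,q a/z;q)_{\infty}}{(q k/y,q k/z,q a,q a/yz;q)_{\infty}},
\]
so that, as recorded in the proofs of Lemmas \ref{p1} and \ref{l1}, one has $f(a,k,z,q)=\lim_{y\to1}\frac{1}{1-y}\bigl(P(y)-1\bigr)$. First I would observe that $P(1)=1$, since at $y=1$ both the numerator and denominator collapse to $(q k,q k/z,q a,q a/z;q)_{\infty}$. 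Writing the difference quotient as $\frac{P(y)-P(1)}{1-y}$, the limit is therefore exactly $-P'(1)$, and because $P(1)=1$ this equals $-(\log P)'(1)$.

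Next I would compute the logarithmic derivative. Writing $\log P(y)$ as a signed sum of the eight $\log(x;q)_\infty$ terms and expanding each via $(x;q)_\infty=\prod_{m\ge0}(1-xq^m)$, the four factors that are independent of $y$ — namely $(qk;q)_\infty$ and $(qa/z;q)_\infty$ in the numerator and $(qa;q)_\infty$ and $(qk/z;q)_\infty$ in the denominator — are constant and vanish under differentiation. For each surviving factor $1-c/y$ one has $\frac{d}{dy}\log(1-c/y)\big|_{y=1}=\frac{c}{1-c}$. Applying this with $c$ running over $kq^{n}/z$ (from the numerator factor $(qk/yz;q)_\infty$), $aq^{n}$ (from $(qa/y;q)_\infty$), $kq^{n}$ (from the denominator factor $(qk/y;q)_\infty$), and $aq^{n}/z$ (from $(qa/yz;q)_\infty$), after re-indexing $n=m+1$, yields
\[
(\log P)'(1)=\sum_{n=1}^{\infty}\frac{q^n k/z}{1-q^n k/z}+\sum_{n=1}^{\infty}\frac{aq^n}{1-aq^n}-\sum_{n=1}^{\infty}\frac{kq^n}{1-kq^n}-\sum_{n=1}^{\infty}\frac{q^n a/z}{1-q^n a/z}.
\]
Negating this expression gives precisely the right side of \eqref{wpeq5}, which completes the identification.

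The only genuine issue is analytic rather than algebraic: I must justify differentiating the product factor-by-factor and interchanging the limit $y\to1$ with the resulting sums. The hypothesis $|q|<1$ together with $|q a|<|z|$, and the standing assumption that none of the denominators vanish (which keeps each quantity $1-cq^n$ bounded away from $0$ in a neighbourhood of $y=1$), guarantee that the four Lambert-type series converge absolutely and that $P(y)$ is analytic near $y=1$; hence the logarithmic-derivative computation and the term-by-term differentiation are legitimate. I expect this convergence bookkeeping — verifying that every one of the four series converges under the stated constraints so that the rearrangement is valid — to be the main, though minor, obstacle, since the differentiation itself is entirely routine.
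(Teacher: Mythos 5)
Your proof is correct and is essentially the paper's own argument: the paper likewise recovers $f(a,k,z,q)$ as $-G'(1)$ for the same infinite product $G(y)$ (your $P(y)$), using $G(1)=1$ and the difference quotient, and then evaluates $G'(1)$ by logarithmic differentiation. You simply carry out the factor-by-factor differentiation explicitly (correctly, with the right signs and re-indexing), where the paper leaves that computation to the reader.
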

\begin{proof}
If we define
\[
G(y):= \frac{(q k,q k/yz,q a/y,q a/z;q)_{\infty}} {(q k/y,q k/z,q a,q
a/yz;q)_{\infty}}
\]
we see that $G(1)=1$ and
\begin{align*}
f(a,k,z,q) &= \lim_{y \to 1}\frac{1}{1-y} \left ( \frac{(q k,q k/yz,q a/y,q a/z;q)_{\infty}} {(q k/y,q k/z,q a,q
a/yz;q)_{\infty}} - 1\right)\\
& =  \lim_{y \to 1}\frac{G(y)-G(1)}{1-y}=-G'(1).
\end{align*}
That $-G'(1)$ equals the right side of \eqref{wpeq5} follows by logarithmically differentiating the infinite products in $G(y)$,
noting that
\[
G'(1)=G(1)\frac{d \log G(y)}{dy}\bigg |_{y=1}=\frac{d \log G(y)}{dy}\bigg |_{y=1}.
\]
\end{proof}

\begin{lemma}\label{l3}
If $f(a,k,z,q)$ is as defined at \eqref{fakzqdef}, $|q a| <|z|$ and  none of the denominators  below vanish, then
\begin{equation}\label{wpeq6}
f(a,k,z,q)+f(-a,-k,z,q)
=
2f(a^2,k^2,z^2,q^2).
\end{equation}
\end{lemma}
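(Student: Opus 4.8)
The plan is to exploit the Lambert series representation of $f$ furnished by Lemma \ref{l2}, rather than to manipulate the defining hypergeometric series at \eqref{fakzqdef} directly. By \eqref{wpeq5} we have
\begin{equation*}
f(a,k,z,q)=\sum_{n=1}^{\infty}\frac{k q^n}{1-kq^n}+\sum_{n=1}^{\infty}\frac{q^n a/z}{1-q^n a/z}-\sum_{n=1}^{\infty}\frac{a q^n}{1-aq^n}-\sum_{n=1}^{\infty}\frac{q^n k/z}{1-q^n k/z},
\end{equation*}
so the identity \eqref{wpeq6} reduces to a statement purely about such Lambert series. First I would write out all three quantities $f(a,k,z,q)$, $f(-a,-k,z,q)$, and $f(a^2,k^2,z^2,q^2)$ using this formula, being careful that in the third expression the base is $q^2$, so each summand carries a factor $q^{2n}$ and the parameters are $a^2,k^2,z^2$.

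The key observation is a termwise splitting identity for the generic Lambert summand. For a parameter $w$ (to be taken in turn as $k$, $a/z$, $a$, and $k/z$), I would verify the elementary identity
\begin{equation*}
\frac{w q^n}{1-w q^n}+\frac{(-w)q^n}{1-(-w)q^n}=\frac{2\,w^2 q^{2n}}{1-w^2 q^{2n}},
\end{equation*}
which follows by putting the left side over a common denominator and simplifying. Summing this over $n\ge 1$ shows that the $w$-contribution to $f(a,k,z,q)+f(-a,-k,z,q)$ equals exactly the $w^2$-contribution (in base $q^2$) to $2f(a^2,k^2,z^2,q^2)$. The sign bookkeeping works out because replacing $(a,k)$ by $(-a,-k)$ sends $w\in\{k,a/z,a,k/z\}$ to $-w$ in each of the four sums, with the two negative signs preserved; and squaring sends $k\mapsto k^2$, $a/z\mapsto a^2/z^2$, $a\mapsto a^2$, $k/z\mapsto k^2/z^2$, matching the parameters of $f(a^2,k^2,z^2,q^2)$ read off from \eqref{wpeq5}.

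Assembling the four cases termwise then gives \eqref{wpeq6} directly. The main obstacle, such as it is, is purely clerical: one must confirm that the four Lambert sums in $f(a^2,k^2,z^2,q^2)$ pair up correctly with the four sums arising from $f(a,k,z,q)+f(-a,-k,z,q)$, in particular that the two positive and two negative signs survive both the negation and the squaring consistently. The convergence hypothesis $|qa|<|z|$ inherited from Lemma \ref{l2} guarantees absolute convergence of every series involved, so the rearrangement and termwise combination are legitimate, and no analytic subtlety arises.
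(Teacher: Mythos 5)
Your proof is correct and is essentially the paper's own argument: both write each $f$ via the Lambert series representation \eqref{wpeq5} and combine the four pairs of series using the elementary identity $\frac{wq^n}{1-wq^n}+\frac{(-w)q^n}{1-(-w)q^n}=\frac{2w^2q^{2n}}{1-w^2q^{2n}}$. Your only addition is making the sign bookkeeping and the termwise pairing explicit, which the paper leaves implicit.
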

\begin{proof}
Use  \eqref{wpeq5} to write $f(a,k,z,q)+f(-a,-k,z,q)$ in terms of Lambert series. Then use the elementary identity
\begin{multline*}
\sum_{n=1}^{\infty} \frac{x q^n}{1-x q^n}+\sum_{n=1}^{\infty} \frac{(-x)q^n}{1-(-x)q^n}=\sum_{n=1}^{\infty}\left( \frac{x q^n}{1-x q^n}+\frac{(-x)q^n}{1-(-x)q^n}\right)\\
= \sum_{n=1}^{\infty} \frac{2x^2q^{2n}}{1-x^2q^{2n}}
= 2\sum_{n=1}^{\infty} \frac{x^2q^{2n}}{1-x^2q^{2n}}
\end{multline*}
to combine pairs of Lambert series into  single Lambert series, thus deriving the right side of \eqref{wpeq6}
\end{proof}

Remark: By somewhat similar reasoning, one can show that if $m\geq 2$ is a positive integer and $\omega$ is a primitive $m$-root of unity, then
\[
\sum_{j=0}^{m-1}f(a \omega^j, k \omega ^j, z, q) = m f(a^m,k^m,z^m,q^m).
\]

\begin{proof}[Proof of Theorem \ref{c3}]
Use \eqref{wpeq2} (noting that the series on the right side is $f(a,k,z,q)$) to substitute for $f(a,k,z,q)$, $f(-a,-k,z,q)$ and $f(a^2,k^2,z^2,q^2)$ in \eqref{wpeq6}, and the results follows after a little rearrangement of terms.
\end{proof}

One could easily insert specific  WP-Bailey pairs in \eqref{wpeq2n}
to provide explicit identities, but we leave that to the reader. We also note that letting $k \to 0$ in Theorem \ref{c3} gives a result for standard Bailey pairs.

\begin{corollary}\label{c33}
If $(\alpha_n(a,q), \beta_n(a,q))$ is a Bailey pair with respect to
$a$, then subject to suitable convergence conditions,
{\allowdisplaybreaks
\begin{multline}\label{wpeq2nn}
\sum_{n=1}^{\infty} (z;q)_{n}(q;q)_{n-1} \left( \frac{q a}{ z
}\right )^{n} \beta_n(a,q) +  \sum_{n=1}^{\infty}
(z;q)_{n}(q;q)_{n-1}\left( \frac{-q a}{ z }\right )^{n}
\beta_n(-a,q)\\-2 \sum_{n=1}^{\infty}
(z^2;q^2)_{n}(q^2;q^2)_{n-1}\left( \frac{q^2 a^2}{ z^2 }\right )^{n}
\beta_n(a^2,q^2)\\=
\sum_{n=1}^{\infty}\frac{(z;q)_{n}(q;q)_{n-1}}{(q a ,q
a/z;q)_n}\left (\frac{q a}{z}\right)^{n}\alpha_n(a,q)\\ +
\sum_{n=1}^{\infty}\frac{(z;q)_{n}(q;q)_{n-1}}{(-q a ,-q
a/z;q)_n}\left (\frac{-q a}{z}\right)^{n}\alpha_n(-a,q)\\-2
\sum_{n=1}^{\infty}\frac{(z^2;q^2)_{n}(q^2;q^2)_{n-1}}{(q^2 a^2 ,q^2
a^2/z^2;q^2)_n}\left (\frac{q^2
a^2}{z^2}\right)^{n}\alpha_n(a^2,q^2).
\end{multline}
}
\end{corollary}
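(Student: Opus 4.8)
The plan is to obtain the corollary directly from Theorem \ref{c3} by specializing to $k = 0$. Recall from the definition following \eqref{WPpair} that a Bailey pair with respect to $a$ is precisely a WP-Bailey pair with $k = 0$; that is, $(\alpha_n(a,q), \beta_n(a,q)) = (\alpha_n(a,0,q), \beta_n(a,0,q))$. Thus I would feed such a pair into \eqref{wpeq2n} and then let $k \to 0$ on both sides, checking that every factor converges to the claimed limit.

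First I would examine the three sums on the left side of \eqref{wpeq2n}. In the first sum, as $k \to 0$ the factor $(1 - k q^{2n})/(1-k) \to 1$, while both $(qk;q)_n$ and $(qk/z;q)_n$ in the denominator tend to $1$ (each factor is of the form $1 - qk q^j \to 1$), and $\beta_n(a,k,q) \to \beta_n(a,q)$; hence the $n$-th term collapses to $(z;q)_n(q;q)_{n-1}(qa/z)^n \beta_n(a,q)$. The second sum behaves identically with $a, k$ replaced by $-a, -k$: the factor $(1 + k q^{2n})/(1+k) \to 1$ and $(-qk, -qk/z;q)_n \to 1$, giving $(z;q)_n(q;q)_{n-1}(-qa/z)^n \beta_n(-a,q)$. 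In the third sum the base is $q^2$ and the parameters are squared, so $(1 - k^2 q^{4n})/(1-k^2) \to 1$ and $(q^2 k^2, q^2 k^2/z^2;q^2)_n \to 1$, yielding $-2\,(z^2;q^2)_n(q^2;q^2)_{n-1}(q^2 a^2/z^2)^n \beta_n(a^2,q^2)$. These three limits are exactly the three sums forming the left side of \eqref{wpeq2nn}.

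Next I would treat the right side of \eqref{wpeq2n}. Here the only dependence on $k$ is through the arguments of the $\alpha$-sequences, since the prefactors $(z;q)_n(q;q)_{n-1}/(qa, qa/z;q)_n$ (and their $-a$ and squared analogues) contain no $k$. Letting $k \to 0$ replaces $\alpha_n(a,k,q)$, $\alpha_n(-a,-k,q)$ and $\alpha_n(a^2,k^2,q^2)$ by $\alpha_n(a,q)$, $\alpha_n(-a,q)$ and $\alpha_n(a^2,q^2)$ respectively, which reproduces the right side of \eqref{wpeq2nn} verbatim. Combining the two sides then yields the corollary.

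The only point requiring care is the justification of passing the limit $k \to 0$ inside the infinite sums, which is precisely why the statement carries the blanket hypothesis of suitable convergence conditions; under those conditions the termwise limits computed above sum to the limits of the full series, and no cancellation or boundary contribution is lost. I do not anticipate any genuine obstacle beyond this routine interchange of limit and summation.
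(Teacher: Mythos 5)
Your proposal is correct and is exactly the paper's route: the paper obtains Corollary \ref{c33} by letting $k \to 0$ in Theorem \ref{c3}, with the same termwise simplifications you describe. One small simplification: since a Bailey pair \emph{is} a WP-Bailey pair with $k=0$ (the factors $(1-kq^{2n})/(1\mp k)$, $(\pm qk;q)_n$, $(\pm qk/z;q)_n$ all equal $1$ identically at $k=0$), you may simply substitute $k=0$ into \eqref{wpeq2n}, so no interchange of limit and summation is actually needed.
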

Once again we leave it to the reader to produce particular identities, by inserting specific Bailey pairs.

\begin{lemma}
If $f(a,k,z,q)$ is as defined at \eqref{fakzqdef},  $|q a| <|z|<|a/q|$ and  none of the denominators  vanish, then
\begin{multline}\label{wpeq7}
f(a,k,z,q)-f\left(\frac{1}{a},\frac{1}{k},\frac{1}{z},q\right)=
\frac{(a-k)(1-1/z)(1-ak/z)}{(1-a)(1-k)(1-a/z)(1-k/z)}\\
+\frac{z}{k}\frac{(z,q/z,k/a,qa/k,ak/z,qz/ak,q,q;q)_{\infty}}
{(z/k,qk/z,z/a,qa/z,a,q/a,k,q/k;q)_{\infty}}.
\end{multline}
\end{lemma}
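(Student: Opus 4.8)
The plan is to exploit the Lambert-series representation of $f(a,k,z,q)$ established in Lemma \ref{l2}, exactly as Lemma \ref{l3} did, but now combining $f(a,k,z,q)$ with its ``inverse-variable'' counterpart $f(1/a,1/k,1/z,q)$ rather than with a sign-twisted copy. First I would write out both sides in terms of the four Lambert sums given at \eqref{wpeq5}: the first term contributes
\[
\sum_{n\ge1}\frac{kq^n}{1-kq^n}+\sum_{n\ge1}\frac{(a/z)q^n}{1-(a/z)q^n}-\sum_{n\ge1}\frac{aq^n}{1-aq^n}-\sum_{n\ge1}\frac{(k/z)q^n}{1-(k/z)q^n},
\]
while $f(1/a,1/k,1/z,q)$ is obtained by the substitutions $k\mapsto1/k$, $a\mapsto1/a$, $a/z\mapsto z/a$, $k/z\mapsto z/k$. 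The key arithmetic fact I would use is the elementary reflection identity
\[
\frac{xq^n}{1-xq^n}-\frac{q^n/x}{1-q^n/x}=\frac{xq^n}{1-xq^n}+\frac{1}{1-q^n/x}-1,
\]
which, after resumming over $n\ge1$, collapses each difference of Lambert series attached to a parameter $x$ into a finite expression plus a genuinely bilateral sum $\sum_{n\in\mathbb{Z}}$ of the form $\sum_n xq^n/(1-xq^n)$.

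The heart of the argument is therefore to recognize that the difference $f(a,k,z,q)-f(1/a,1/k,1/z,q)$ reassembles into bilateral Lambert series, which by the classical identity
\[
\sum_{n=-\infty}^{\infty}\frac{x q^n}{1-xq^n}=\ \text{(a quotient of theta functions)}
\]
can be written as an infinite theta-product. Concretely, I would pair the four parameters of $f(a,k,z,q)$ against the four reflected parameters of $f(1/a,1/k,1/z,q)$ so that the resulting bilateral sums carry the arguments $k$, $a/z$, $a$, $k/z$; the ``$-1$'' boundary terms produced by the reflection identity account for the finite rational piece
\[
\frac{(a-k)(1-1/z)(1-ak/z)}{(1-a)(1-k)(1-a/z)(1-k/z)}
\]
on the right of \eqref{wpeq7}, while the surviving bilateral theta sums combine to produce the single infinite-product term with the prefactor $z/k$. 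An alternative, and probably cleaner, route is to return to the closed product $G(y)$ from Lemma \ref{l2}: since $f(a,k,z,q)=-G'(1)$, the difference in question equals $-G'(1)+\widetilde G'(1)$, where $\widetilde G$ is $G$ under $(a,k,z)\mapsto(1/a,1/k,1/z)$, and one can instead evaluate the limit
\[
\lim_{y\to1}\frac{1}{1-y}\Bigl(G(y)-\widetilde G(y)\Bigr)
\]
directly by first simplifying $G(y)-\widetilde G(y)$ using the reflection symmetry of the infinite products and Bailey's $\,_6\psi_6$ formula \eqref{baileyeq1} to evaluate the bilateral pieces before taking the limit.

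The main obstacle will be the bookkeeping in assembling the theta-product: matching the eight shifted-factorial factors in the numerator and denominator of the right side of \eqref{wpeq7} against the output of the bilateral sums requires a careful application of \eqref{baileyeq1} with a specific choice of the parameters $a,b,c,d,e$ (most naturally specializing so that the bilateral $\,_6\psi_6$ degenerates to a product of theta functions), and then verifying that the modular/quasi-periodicity factors line up to give precisely the $z/k$ prefactor. I expect the finite rational term to drop out essentially for free from the reflection identity's boundary contributions, so the real work is confirming the infinite-product normalization; the convergence hypothesis $|qa|<|z|<|a/q|$ is exactly what guarantees the relevant Lambert and bilateral series converge in overlapping annuli, which I would check at the outset to justify interchanging the limit $y\to1$ with the (now bilateral) summations.
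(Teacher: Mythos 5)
Your overall strategy --- start from the Lambert-series representation of Lemma \ref{l2}, reflect the series for $f(1/a,1/k,1/z,q)$ into negative indices, and evaluate the resulting bilateral object by Bailey's formula \eqref{baileyeq1} --- is exactly the skeleton of the paper's proof, but two of your stated steps fail as written. First, your reflection identity has a sign error: since $-\frac{q^n/x}{1-q^n/x}=1-\frac{1}{1-q^n/x}$, the correct statement is
\[
\frac{xq^n}{1-xq^n}-\frac{q^n/x}{1-q^n/x}=\frac{xq^n}{1-xq^n}+\frac{xq^{-n}}{1-xq^{-n}}+1,
\]
whereas your right-hand side $\frac{xq^n}{1-xq^n}+\frac{1}{1-q^n/x}-1$ equals the \emph{sum} $\frac{xq^n}{1-xq^n}+\frac{q^n/x}{1-q^n/x}$, not the difference. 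Second, and more seriously, the ``classical identity'' you invoke does not exist: the bilateral series $\sum_{n\in\mathbb{Z}}\frac{xq^n}{1-xq^n}$ diverges, because its terms tend to $-1$ as $n\to-\infty$. For the same reason, a single difference such as $\sum_{n\ge1}\frac{kq^n}{1-kq^n}-\sum_{n\ge1}\frac{(1/k)q^n}{1-(1/k)q^n}$ does \emph{not} split into ``a finite expression plus a genuinely bilateral sum'': the corrective constants $+1$ produce the divergent $\sum_{n\ge1}1$, and the reflected tail diverges as well. These divergences cancel only when all four parameters are combined with their signs ($+k$, $+a/z$, $-a$, $-k/z$), so evaluating the four bilateral sums one parameter at a time, as you propose, is not a viable step.

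The repair is precisely the paper's maneuver: combine the four Lambert terms \emph{for each fixed $n$} into the single rational function
\[
\frac{(k-a)\left(1-\tfrac 1z\right)\left(1-\tfrac{akq^{2n}}{z}\right)q^n}{(1-aq^n)(1-kq^n)\left(1-\tfrac{aq^n}{z}\right)\left(1-\tfrac{kq^n}{z}\right)},
\]
recognize this as a constant multiple of the general term of a very-well-poised bilateral series with base $ak/z$ and parameters $b=a$, $c=k$, $d=a/z$, $e=k/z$ (so that the argument is exactly $q$), use \eqref{qneg} to check that the $n\le -1$ terms reproduce $-f(1/a,1/k,1/z,q)$, and then apply \eqref{baileyeq1} once to the full bilateral sum. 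Note also that in this accounting the rational term on the right of \eqref{wpeq7} does not come from the reflection's ``boundary terms'' (those cancel identically across the four parameters); it is the excluded $n=0$ term of the bilateral sum, moved to the other side. Your alternative route via $\lim_{y\to1}\bigl(G(y)-\widetilde G(y)\bigr)/(1-y)$ is sound in principle, but to evaluate that limit you are forced back to logarithmic differentiation, hence to the same four-term Lambert combination and the same $\,_6\psi_6$ evaluation.
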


\begin{proof}
One can check (preferably with a computer algebra system) that
\begin{multline*}
\frac{k q^n}{1-kq^n}+ \frac{ q^n a/z}{1-q^n a/z}-
\frac{a q^n}{1-aq^n}- \frac{ q^n k/z}{1-q^n k/z} \\= \frac{(a-k) q^n \left(1-\frac{a k q^{2 n}}{z}\right) (1-z)}{\left(1-a
   q^n\right) \left(1-k q^n\right) \left(1-\frac{a q^n}{z}\right)
   \left(1-\frac{k q^n}{z}\right) z}\\
   = \frac{(k-a)(1-1/z)(1-a k/z)}{(1-a)(1-k)(1-a/z)(1-k/z)}\frac{\left(1-\frac{a k q^{2 n}}{z}\right)(a,k,a/z,k/z;q)_n q^n}{(1-a k/z)(aq,kq,aq/z,kq/z;q)_n},
\end{multline*}
so that
\begin{multline*}
f(a,k,z,q)\\=\frac{(k-a)(1-1/z)(1-a k/z)}{(1-a)(1-k)(1-a/z)(1-k/z)}\sum_{n=1}^{\infty}\frac{\left(1-\frac{a k q^{2 n}}{z}\right)(a,k,a/z,k/z;q)_n q^n}{(1-a k/z)(aq,kq,aq/z,kq/z;q)_n}
\end{multline*}
Similarly, it can be shown that
\begin{multline*}
f\left(\frac{1}{a},\frac{1}{k},\frac{1}{z},q\right)=-\frac{(k-a)(1-1/z)(1-a k/z)}{(1-a)(1-k)(1-a/z)(1-k/z)}\\ \times\sum_{n=1}^{\infty}\frac{\left(1-\frac{z q^{2 n}}{a k}\right)(1/a,1/k,z/a,z/k;q)_n q^n}{(1-z/a k)(q/a,q/k,qz/a,qz/k;q)_n}.
\end{multline*}
From the remarks above and \eqref{qneg}, \begin{multline*}
f(a,k,z,q)-f\left(\frac{1}{a},\frac{1}{k},\frac{1}{z},q\right)+\frac{(k-a)(1-1/z)(1-a k/z)}{(1-a)(1-k)(1-a/z)(1-k/z)}\\
=\frac{(k-a)(1-1/z)(1-a k/z)}{(1-a)(1-k)(1-a/z)(1-k/z)}\sum_{n=-\infty}^{\infty}\frac{\left(1-\frac{a k q^{2 n}}{z}\right)(a,k,a/z,k/z;q)_n q^n}{(1-a k/z)\left(aq,kq,\frac{aq}{z},\frac{kq}{z};q\right)_n}\\
=\frac{(k-a)(1-1/z)(1-a k/z)}{(1-a)(1-k)(1-a/z)(1-k/z)}\\
\times
\frac{(akq/z,q/z,kq/a,q,q,qa/k,zq,q,qz/ak;q)_{\infty}}
{(qk/z,qa/z,kq,aq,q/a,q/k,qz/a,qz/k,q;q)_{\infty}}
\end{multline*}
where the last equality follows from \eqref{baileyeq1}, with $b=a$, $c=k$, $d=a/z$, $e=k/z$ and $ak/z$ instead of $a$.
Some further easy manipulations gives the final result.
\end{proof}

Remark: The proof that the sum of Lambert series above combine to give the stated
infinite product was first given by Andrews, Lewis and Liu in \cite{ALL01} (using a different labeling for the parameters) in a different context, so they did not have our reciprocity result for the basic hypergeometric series $f(a,k,z,q)$.

Note that substituting the expression for $f(a,k,z,q)$ from
\eqref{fakzqdef} into \eqref{wpeq7} leads to the identity
{\allowdisplaybreaks
\begin{multline}\label{wpeq7a}
\sum_{n=1}^{\infty} \frac{(q\sqrt{k},-q\sqrt{k},k,z,\frac{k}{a}
;q)_{n}(q;q)_{n-1}} {\left(\sqrt{k},-\sqrt{k},kq, q a,\frac{q
k}{z},q;q\right)_{n}}
\left( \frac{q a}{ z }\right )^{n} \\
- \sum_{n=1}^{\infty}
\frac{\left(\frac{q}{\sqrt{k}},\frac{-q}{\sqrt{k}},\frac{1}{k},\frac{1}{z},\frac{a}{k}
;q\right)_{n}(q;q)_{n-1}}
{\left(\frac{1}{\sqrt{k}},\frac{-1}{\sqrt{k}}, \frac{q }{k},\frac{q
z}{k},\frac{q}{a},q;q\right)_{n}}\left( \frac{q z}{ a }\right )^{n}
\\=
\frac{(a-k)\left(1-\frac{1}{z}\right)\left(1-\frac{ak}{z}\right)}
{(1-a)(1-k)\left(1-\frac{a}{z}\right)\left(1-\frac{k}{z}\right)}
+\frac{z}{k}\frac{\left(z,\frac{q}{z},\frac{k}{a},\frac{qa}{k},\frac{ak}{z},\frac{qz}{ak},q,q;q\right)_{\infty}}
{\left(\frac{z}{k},\frac{qk}{z},\frac{z}{a},\frac{qa}{z},a,\frac{q}{a},k,\frac{q}{k};q\right)_{\infty}},
\end{multline}
}an identity which does not appear to follow directly from Bailey's
formula at \eqref{baileyeq1}. We are now ready to prove Theorem
\ref{t1}.

\begin{proof}[Proof of Theorem \ref{t1}]
In the identity at \eqref{wpeq2}, note that the right side equals $f(a,k,z,q)$. Now replace $a$ with $1/a$, $k$ with $1/k$, $z$ with $1/z$, and subtract the resulting identity from the original identity. The left side of the resulting identity is the left side of \eqref{wpeq8}, while the right side is $f(a,k,z,q)-f(1/a,1/k,1/z,q)$, which by   \eqref{wpeq7} is the right side of \eqref{wpeq8}.
\end{proof}

Any WP-Bailey that is inserted into \eqref{wpeq8} will lead to a
summation formula for basic hypergeometric series. We give two
example as  illustrations.

\begin{corollary}
{\allowdisplaybreaks
\begin{multline}\label{wpeq9}
\sum_{n=1}^{\infty} \frac{(q\sqrt{k},-q\sqrt{k},z,k, \frac{k \rho_1}{a},\frac{k\rho_2}{a}, \frac{aq}{\rho_1\rho_2} ;q)_{n}(q;q)_{n-1}}
{\left(\sqrt{k},-\sqrt{k}, q k,\frac{q k}{z},\frac{aq}{\rho_1},\frac{aq}{\rho_2},\frac{k \rho_1 \rho_2}{a},q;q\right)_{n}}\left( \frac{q a}{ z }\right )^{n} \\
-
\sum_{n=1}^{\infty} \frac{\left(\frac{q}{\sqrt{k}},\frac{-q}{\sqrt{k}},\frac{1}{z},
\frac{1}{k},\frac{a\rho_1}{k},\frac{a\rho_2}{k},\frac{q}{a\rho_1\rho_2};q\right)_{n}(q;q)_{n-1}}
{\left(\frac{1}{\sqrt{k}},\frac{-1}{\sqrt{k}}, \frac{q }{k},\frac{q z}{k}
,\frac{q}{a\rho_1},\frac{q}{a\rho_2},\frac{a\rho_1\rho_2}{k},q;q\right)_{n}}\left( \frac{q z}{ a }\right )^{n}\\
-
 \sum_{n=1}^{\infty}\frac{(q\sqrt{a}, -q\sqrt{a},a,\rho_1,\rho_2,\frac{a^2q}{k\rho_1\rho_2},z;q)_{n}(q;q)_{n-1}}
 {\left(\sqrt{a},-\sqrt{a},\frac{aq}{\rho_1},\frac{aq}{\rho_2},\frac{k\rho_1\rho_2}{a},q a ,\frac{q
a}{z},q;q\right)_n}\left (\frac{q k}{z}\right)^{n}\\
+
\sum_{n=1}^{\infty}\frac{\left(\frac{q}{\sqrt{a}}, \frac{-q}{\sqrt{a}}, \frac{1}{a}, \rho_1,\rho_2,\frac{kq}{a^2\rho_1\rho_2},\frac{1}{z};q\right)_{n}(q;q)_{n-1}}
{\left(\frac{1}{\sqrt{a}},\frac{-1}{\sqrt{a}},\frac{q}{a\rho_1},\frac{q}{a\rho_2},\frac{a\rho_1\rho_2}{k},\frac{q}{a} ,\frac{q
z}{a},q;q\right)_n}\left (\frac{q z}{k}\right)^{n}  \\=
\frac{(a-k)\left(1-\frac{1}{z}\right)\left(1-\frac{ak}{z}\right)}
{(1-a)(1-k)\left(1-\frac{a}{z}\right)\left(1-\frac{k}{z}\right)}
+\frac{z}{k}\frac{\left(z,\frac{q}{z},\frac{k}{a},\frac{qa}{k},\frac{ak}{z},\frac{qz}{ak},q,q;q\right)_{\infty}}
{\left(\frac{z}{k},\frac{qk}{z},\frac{z}{a},\frac{qa}{z},a,\frac{q}{a},k,\frac{q}{k};q\right)_{\infty}}.
\end{multline}
}
\end{corollary}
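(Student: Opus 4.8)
The plan is to derive \eqref{wpeq9} by substituting a single explicit WP-Bailey pair into the identity \eqref{wpeq8} of Theorem \ref{t1}. The pair I would use is the one obtained by feeding the unit pair (the WP-Bailey pair with $\beta_n=\delta_{n,0}$) through Andrews' chain \eqref{wpn1}, with the two free parameters there specialized to $\rho_1,\rho_2$, so that the accompanying $c$ becomes $k\rho_1\rho_2/(aq)$. Explicitly this is the pair
\[
\alpha_n(a,k)=\frac{(1-aq^{2n})\left(a,\rho_1,\rho_2,\frac{a^2q}{k\rho_1\rho_2};q\right)_n}{(1-a)\left(q,\frac{aq}{\rho_1},\frac{aq}{\rho_2},\frac{k\rho_1\rho_2}{a};q\right)_n}\left(\frac{k}{a}\right)^n,\qquad
\beta_n(a,k)=\frac{\left(k,\frac{k\rho_1}{a},\frac{k\rho_2}{a},\frac{aq}{\rho_1\rho_2};q\right)_n}{\left(\frac{aq}{\rho_1},\frac{aq}{\rho_2},\frac{k\rho_1\rho_2}{a},q;q\right)_n}.
\]
Since \eqref{wpn1} sends WP-Bailey pairs to WP-Bailey pairs, this is automatically a WP-Bailey pair; as an independent check, verifying \eqref{WPpair} for it directly reduces, after the kernel factors are written in standard form, to Jackson's terminating very-well-poised $_8\phi_7$ summation, the required balancing being forced by the choice of the fourth upper parameter $a^2q/(k\rho_1\rho_2)$ (the five numerator parameters $\rho_1,\rho_2,a^2q/(k\rho_1\rho_2),kq^n,q^{-n}$ have product $a^2q^{\,n+1}$ once $q^{-n}$ is removed).

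Next I would insert this pair into the four series of \eqref{wpeq8}. In the first and third series (those carrying $\beta_n(a,k)$ and $\alpha_n(a,k)$) I substitute the formulas above directly: the prefactor $(q\sqrt k,-q\sqrt k,z;q)_n(q;q)_{n-1}/(\sqrt k,-\sqrt k,qk,qk/z;q)_n$ times $\beta_n(a,k)$ gives the first series of \eqref{wpeq9}, while in the third series the monomial $(qa/z)^n$ combines with the $(k/a)^n$ inside $\alpha_n(a,k)$ to produce the displayed argument $(qk/z)^n$ of the third series. For the remaining two series I need $\beta_n(1/a,1/k)$ and $\alpha_n(1/a,1/k)$, which I form by replacing $a$ with $1/a$ and $k$ with $1/k$ in the two formulas above while holding $\rho_1,\rho_2$ fixed. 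Under this replacement $\frac{a^2q}{k\rho_1\rho_2}\mapsto\frac{kq}{a^2\rho_1\rho_2}$, $\frac{aq}{\rho_i}\mapsto\frac{q}{a\rho_i}$, $\frac{k\rho_1\rho_2}{a}\mapsto\frac{a\rho_1\rho_2}{k}$, the well-poised factor $\frac{1-aq^{2n}}{1-a}$ turns into $\frac{(q/\sqrt a,-q/\sqrt a;q)_n}{(1/\sqrt a,-1/\sqrt a;q)_n}$ (since $\sqrt{1/a}=1/\sqrt a$), and the monomial $(k/a)^n$ becomes $(a/k)^n$; multiplying by the remaining prefactors of \eqref{wpeq8}, the argument $(qz/a)^n$ of the fourth series merges with $(a/k)^n$ to give $(qz/k)^n$, reproducing the fourth series of \eqref{wpeq9}, and the second series (with unchanged argument $(qz/a)^n$, since $\beta_n$ carries no monomial) is produced the same way.

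Finally, because the right-hand side of \eqref{wpeq8} does not depend on the inserted pair, it passes verbatim to the right-hand side of \eqref{wpeq9}, completing the derivation. I expect the main obstacle to be organizational rather than conceptual: one must execute the reciprocal substitution $a\mapsto 1/a$, $k\mapsto 1/k$ accurately, in particular keeping $\rho_1,\rho_2$ fixed, handling $\sqrt{1/a}=1/\sqrt a$ and $\sqrt{1/k}=1/\sqrt k$ correctly, and tracking how the monomials $(k/a)^n$ and $(a/k)^n$ combine with the prefactor arguments $(qa/z)^n$ and $(qz/a)^n$ to give the powers $(qk/z)^n$ and $(qz/k)^n$ shown; one then checks that each Pochhammer symbol in every series lands exactly where \eqref{wpeq9} records it. No new summation is needed beyond the $_8\phi_7$ evaluation that certifies the pair, and even that can be bypassed by appealing to \eqref{wpn1}.
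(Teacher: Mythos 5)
Your proposal is correct and matches the paper's proof: the WP-Bailey pair you construct (whether certified via Andrews' chain applied to the unit pair or via Jackson's $_8\phi_7$ summation) is precisely Singh's WP-Bailey pair, which is exactly what the paper inserts into \eqref{wpeq8}, and your tracking of the reciprocal substitution $a\mapsto 1/a$, $k\mapsto 1/k$, $z\mapsto 1/z$ and of the monomials $(k/a)^n$, $(a/k)^n$ combining with $(qa/z)^n$, $(qz/a)^n$ is accurate. The only difference is cosmetic: the paper writes the factor $(1-aq^{2n})/(1-a)$ as $(q\sqrt{a},-q\sqrt{a};q)_n/(\sqrt{a},-\sqrt{a};q)_n$ and simply cites Singh rather than re-deriving the pair.
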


\begin{proof}
Insert Singh's WP-Bailey pair \cite{S94},
%{\allowdisplaybreaks
\begin{align*}
%\label{singhpr}
\alpha_{n}(a,k)&=\frac{(q \sqrt{a}, -q
\sqrt{a},a,\rho_1,\rho_2,a^2q/k\rho_1\rho_2;q)_n}
{(\sqrt{a},-\sqrt{a},q,a q/\rho_1,a q/\rho_2,k\rho_1\rho_2/a;q)_n}\left(\frac{k}{a}\right)^n,\\
\beta_n(a,k)&=\frac{(k \rho_1/a, k\rho_2/a, k,
aq/\rho_1\rho_2;q)_n}{(a q/\rho_1, a q/\rho_2, k \rho_1
\rho_2/a,q;q)_n}, \notag
\end{align*}
%}
into \eqref{wpeq8}.
\end{proof}

\begin{corollary}
{\allowdisplaybreaks
\begin{multline}\label{wpeq90}
\sum_{n=1}^{\infty}
\frac{\left(q\sqrt{k},-q\sqrt{k},z,\frac{k^2}{qa^2};q\right)_{n}(q;q)_{n-1}}
{\left(\sqrt{k},-\sqrt{k}, q k,\frac{q k}{z},q;q\right)_{n}}
\left( \frac{q a}{ z }\right )^{n} \\
- \sum_{n=1}^{\infty}
\frac{\left(\frac{q}{\sqrt{k}},-\frac{q}{\sqrt{k}},\frac{1}{z},\frac{a^2}{qk^2};q\right)_{n}(q;q)_{n-1}}
{\left(\frac{1}{\sqrt{k}},-\frac{1}{\sqrt{k}}, \frac{q }{k},\frac{q
z}{k},q;q\right)_{n}}\left( \frac{q z}{ a }\right )^{n}\\
-\sum_{n=1}^{\infty}\frac{(q\sqrt{a}, -q\sqrt{a},a,z,\frac{k}{a
q};q)_{n}\left(\frac{q a^2}{k},q\right)_{2n}  (q;q)_{n-1}}
 {\left(\sqrt{a},-\sqrt{a},qa,\frac{q
a}{z},\frac{a^2q^2}{k},q;q\right)_n(k;q)_{2n}}\left (\frac{q k}{z}\right)^{n}\\
+\sum_{n=1}^{\infty}\frac{\left(\frac{q}{\sqrt{a}},
\frac{-q}{\sqrt{a}}, \frac{1}{a},
\frac{1}{z},\frac{a}{kq};q\right)_{n}\left(\frac{qk}{a^2},q\right)_{2n}
(q;q)_{n-1}}
{\left(\frac{1}{\sqrt{a}},\frac{-1}{\sqrt{a}},\frac{q}{a} ,\frac{q
z}{a},\frac{k q^2}{a^2},q;q\right)_n
\left(\frac{1}{k},q\right)_{2n}} \left (\frac{q z}{k}\right)^{n} \\=
\frac{(a-k)\left(1-\frac{1}{z}\right)\left(1-\frac{ak}{z}\right)}
{(1-a)(1-k)\left(1-\frac{a}{z}\right)\left(1-\frac{k}{z}\right)}
+\frac{z}{k}\frac{\left(z,\frac{q}{z},\frac{k}{a},
\frac{qa}{k},\frac{ak}{z},\frac{qz}{ak},q,q;q\right)_{\infty}}
{\left(\frac{z}{k},\frac{qk}{z},\frac{z}{a},\frac{qa}{z},a,\frac{q}{a},k,\frac{q}{k};q\right)_{\infty}}.
\end{multline}
}
\end{corollary}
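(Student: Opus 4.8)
The plan is to recognize \eqref{wpeq90} as nothing more than a special case of Theorem \ref{t1}: it should arise by substituting one explicit WP-Bailey pair into \eqref{wpeq8}, exactly as the preceding corollary arose from Singh's pair. So the first task is to reverse-engineer the pair from the statement. Matching the first sum of \eqref{wpeq90} against the first sum of \eqref{wpeq8} forces $\beta_n(a,k)=(k^2/(a^2q);q)_n/(q;q)_n$. Matching the third sums, after stripping the universal factor $(z;q)_n(q;q)_{n-1}/(qa,qa/z;q)_n$ and the ratio $(qa/z)^n$, forces
\[
\alpha_n(a,k)=\frac{(q\sqrt a,-q\sqrt a,a,k/(aq);q)_n\,(qa^2/k;q)_{2n}}{(\sqrt a,-\sqrt a,a^2q^2/k,q;q)_n\,(k;q)_{2n}}\left(\frac{k}{a}\right)^n,
\]
where the surplus factor $(k/a)^n$ is precisely what converts the ratio $(qa/z)^n$ of \eqref{wpeq8} into the ratio $(qk/z)^n$ that appears in \eqref{wpeq90}.

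Once the pair is in hand I would first check that it is a genuine WP-Bailey pair, i.e.\ that $\alpha_0=\beta_0=1$ (immediate) and that $(\alpha_n,\beta_n)$ satisfy \eqref{WPpair}. The latter amounts to evaluating a terminating very-well-poised sum; the length-$2n$ Pochhammer symbols mark this as a quadratic pair rather than a specialization of Singh's, so I would either reduce it to a known quadratic (or ${}_8\phi_7$-type) summation or cite the pair from the WP-Bailey literature. With that secured, inserting the pair into \eqref{wpeq8} reproduces the first and third sums of \eqref{wpeq90} directly, while the second and fourth sums come from the inverted pair. Here I would record $\beta_n(1/a,1/k)=(a^2/(k^2q);q)_n/(q;q)_n$ and
\[
\alpha_n(1/a,1/k)=\frac{(q/\sqrt a,-q/\sqrt a,1/a,a/(kq);q)_n\,(qk/a^2;q)_{2n}}{(1/\sqrt a,-1/\sqrt a,kq^2/a^2,q;q)_n\,(1/k;q)_{2n}}\left(\frac{a}{k}\right)^n,
\]
noting that the factor $(a/k)^n$ turns the ratio $(qz/a)^n$ of \eqref{wpeq8} into $(qz/k)^n$. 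Since the right-hand side of \eqref{wpeq8} does not involve the pair at all, it passes through unchanged and supplies verbatim the right-hand side of \eqref{wpeq90}.

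The main obstacle is the verification that the reverse-engineered pair really satisfies \eqref{WPpair}; this quadratic summation carries all the genuine content and is easy to get wrong, which is why a quick sanity check at $n=1$ (both sides equal $(1-k^2/(a^2q))/(1-q)$) is worth doing before trusting the general claim. A secondary, error-prone step is propagating the inversions $a\mapsto 1/a$, $k\mapsto 1/k$, $z\mapsto 1/z$ through the length-$2n$ symbols, tracking for instance $k/(aq)\mapsto a/(kq)$, $qa^2/k\mapsto qk/a^2$, and $(k;q)_{2n}\mapsto(1/k;q)_{2n}$ without sign or base slips. If instead one already knows the pair from the literature, the verification is bypassed and the proof collapses to the single substitution used for the previous corollary.
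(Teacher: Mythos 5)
Your proposal is correct and follows essentially the same route as the paper: the pair you reverse-engineered, $\alpha_n(a,k)=\frac{(q\sqrt a,-q\sqrt a,a,k/aq;q)_n(qa^2/k;q)_{2n}}{(\sqrt a,-\sqrt a,q,a^2q^2/k;q)_n(k;q)_{2n}}\left(\frac{k}{a}\right)^n$, $\beta_n(a,k)=\frac{(k^2/qa^2;q)_n}{(q;q)_n}$, is precisely the known WP-Bailey pair of Andrews and Berkovich (\cite[(3.3)--(3.4)]{AB02}), which the paper simply cites and inserts into \eqref{wpeq8}. Your anticipated fallback (citing the pair from the literature rather than re-proving the quadratic summation) is exactly what the paper does, so the verification step you flagged as the main obstacle is bypassed by that citation.
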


\begin{proof}
Insert the WP-Bailey pair (see \cite[(3.3) - (3.4)]{AB02}
%{\allowdisplaybreaks
\begin{align*}
%\label{pr2}
\alpha_{n}(a,k)&=\frac{(q \sqrt{a}, -q \sqrt{a},a,k/aq;q)_n}
{(\sqrt{a},-\sqrt{a},q,a^2q^2/k;q)_n}\frac{(q a^2/k;q)_{2n}}{(k;q)_{2n}}\left(\frac{k}{a}\right)^n,\\
\beta_n(a,k)&=\frac{(k^2/qa^2;q)_n}{(q;q)_n}, \notag
\end{align*}
%}
into \eqref{wpeq8}.
\end{proof}

\section{Applications to Bailey Pairs}

If we let $k\to0$ in Lemmas \ref{p1}, \ref{l1} and
\ref{l2}, we get the following result.

\begin{theorem}\label{t2}
If $(\alpha_n, \beta_n)$ is a Bailey pair with respect to $a$, then subject to suitable convergence conditions,
{\allowdisplaybreaks
\begin{equation}\label{wpeq10}
\sum_{n=1}^{\infty} (z;q)_{n}(q;q)_{n-1}\left( \frac{q a}{ z }\right )^{n} \beta_n - \sum_{n=1}^{\infty}\frac{(z;q)_{n}(q;q)_{n-1}}{(q a ,q
a/z;q)_n}\left (\frac{q a}{z}\right)^{n}\alpha_n=f_1(a,z,q)
%\\
,
\end{equation}
}where
\begin{align*}
f_1(a,z,q)&=-\sum_{n=1}^{\infty} \frac{(q\sqrt{a},-q\sqrt{a},a,z;q)_{n}q^{n(n+1)/2}}
{(\sqrt{a},-\sqrt{a},q a,qa/z;q)_{n}(1-q^n)}\left( \frac{- a}{ z }\right )^{n}\\
&=\sum_{n=1}^{\infty} \frac{(z;q)_{n}}
{(q a;q)_{n}(1-q^n)}\left( \frac{q a}{ z }\right )^{n}\\
&=\sum_{n=1}^{\infty}\frac{a q^n/z}{1-a q^n/z}-\sum_{n=1}^{\infty}\frac{a q^n}{1-aq^n}.
\end{align*}
\end{theorem}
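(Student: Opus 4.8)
The plan is to obtain \eqref{wpeq10} as the $k\to 0$ specialization of the three earlier results about $f(a,k,z,q)$, exactly as the remark preceding the statement indicates. Since a Bailey pair with respect to $a$ is precisely the $k=0$ case of a WP-Bailey pair, I would begin from the identity \eqref{wpeq2} of Lemma \ref{p1} and send $k\to 0$ throughout. On the left-hand side this is routine: in the first sum the $k$-dependent prefactor $(q\sqrt{k},-q\sqrt{k};q)_n/(\sqrt{k},-\sqrt{k},qk,qk/z;q)_n$ tends termwise to $1$, so that sum becomes $\sum_{n\ge 1}(z;q)_n(q;q)_{n-1}(qa/z)^n\beta_n$, while the second sum already involves $k$ only through $\alpha_n(a,k,q)\to\alpha_n(a,q)$. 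This reproduces the left-hand side of \eqref{wpeq10}, so the substance of the theorem lies entirely in showing that the right-hand side $f(a,k,z,q)$ converges, as $k\to 0$, to each of the three stated forms of $f_1(a,z,q)$.

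For the second form I would take the limit directly in the defining series \eqref{fakzqdef}: the factors $(q\sqrt{k},-q\sqrt{k},k,k/a;q)_n$ in the numerator and $(\sqrt{k},-\sqrt{k},qk,qk/z;q)_n$ in the denominator all tend to $1$, leaving $\sum_{n\ge 1}(z;q)_n(qa/z)^n/\big((qa;q)_n(1-q^n)\big)$. For the third form I would instead use the Lambert-series representation \eqref{wpeq5} of Lemma \ref{l2}; as $k\to 0$ the two series $\sum_{n\ge1}kq^n/(1-kq^n)$ and $\sum_{n\ge1}(k/z)q^n/(1-(k/z)q^n)$ vanish termwise, leaving exactly $\sum_{n\ge1}\frac{aq^n/z}{1-aq^n/z}-\sum_{n\ge1}\frac{aq^n}{1-aq^n}$.

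The interesting step, and the one I expect to be the main obstacle, is the first form. Here I would invoke the reciprocity $f(a,k,z,q)=-f(k,a,z,q)$ of Lemma \ref{l1} and pass to the limit on $-f(k,a,z,q)$, i.e. in the series \eqref{fakzqdef} with $a$ and $k$ interchanged. The subtlety is that the factor $(a/k;q)_n$ now diverges as $k\to 0$, so one cannot take the limit factor by factor; instead I would pair it with the summation variable $(qk/z)^n$ and use
\[
(a/k;q)_n\Big(\frac{qk}{z}\Big)^n=\prod_{j=0}^{n-1}\big(1-(a/k)q^j\big)\Big(\frac{qk}{z}\Big)^n\longrightarrow q^{n(n+1)/2}\Big(\frac{-a}{z}\Big)^n
\]
as $k\to 0$, since $\prod_{j=0}^{n-1}(1-(a/k)q^j)\sim(-a/k)^n q^{n(n-1)/2}$ and $q^{n(n-1)/2+n}=q^{n(n+1)/2}$; this is the familiar $q$-exponential collapse already seen in the second line of \eqref{bineqab}. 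Combined with $(qk;q)_n\to 1$ in the denominator and the overall minus sign from Lemma \ref{l1}, this reproduces precisely the first displayed form of $f_1(a,z,q)$, with its $q^{n(n+1)/2}$ factor. Throughout, the equality of the three forms is guaranteed before the limit by Lemmas \ref{l1} and \ref{l2}, and the only analytic point requiring the ``suitable convergence conditions'' is the legitimacy of interchanging the limit $k\to 0$ with the infinite summations.
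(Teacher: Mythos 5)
Your proposal is correct and follows exactly the paper's own route: the paper proves Theorem \ref{t2} precisely by letting $k\to 0$ in Lemma \ref{p1} (giving the identity and the second form of $f_1$), Lemma \ref{l1} (whose reciprocity, under the same limit with the $(a/k;q)_n(qk/z)^n \to (-a/z)^n q^{n(n+1)/2}$ collapse you describe, gives the first form), and Lemma \ref{l2} (giving the Lambert-series form). The only difference is cosmetic: the paper also remarks, after the theorem, that the first two forms can alternatively be recovered by inserting the ``unit'' and ``trivial'' Bailey pairs into \eqref{wpeq10}, but that is presented as an observation rather than the proof.
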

Note that the first two representations for $f_1(a,z,q)$ follow from \eqref{wpeq10}, upon inserting, respectively, the ``unit" Bailey pair
{\allowdisplaybreaks
\begin{align*}
%\label{up}
\alpha_{n}(a,q)&=\frac{(q \sqrt{a}, -q
\sqrt{a},a;q)_n}{(\sqrt{a},-\sqrt{a},q;q)_n}\left(-1\right)^n q^{n(n-1)/2},\\
\beta_n(a,q)&=\begin{cases} 1&n=0, \notag\\
0, &n>1,
\end{cases}
\end{align*}
}and the ``trivial" Bailey pair
\begin{align*}
%\label{tp}
\alpha_{n}(a,q)&=\begin{cases} 1&n=0, \\
0, &n>1,
\end{cases} \\
\beta_n(a,q)&=\frac{1}{(aq,q;q)_n}.\notag
\end{align*}
However, here and subsequently, we prefer to write these representations explicitly.
Upon letting $z \to \infty$ the following identity results.

\begin{corollary}\label{c4}
If $(\alpha_n, \beta_n)$ is a Bailey pair with respect to $a$, then
subject to suitable convergence conditions, {\allowdisplaybreaks
\begin{equation}\label{wpeq11}
\sum_{n=1}^{\infty} (q;q)_{n-1}\left( - a\right )^{n}q^{n(n+1)/2}
\beta_n - \sum_{n=1}^{\infty}\frac{(q;q)_{n-1}\left (-
a\right)^{n}q^{n(n+1)/2}}{(q a ;q)_n}\alpha_n=f_2(a,q)
%\\
,
\end{equation}
}where
\begin{align}\label{f2eq}
f_2(a,q)&=-\sum_{n=1}^{\infty} \frac{(1-a
q^{2n})q^{n^2} a^{n}}
{(1-a q^{n})(1-q^n)}\\
&=\sum_{n=1}^{\infty} \frac{q^{n(n+1)/2}\left(-a\right)^{n}}
{(q a;q)_{n}(1-q^n)}\notag\\
&=-\sum_{n=1}^{\infty}\frac{a q^n}{1-aq^n}.\notag
\end{align}
\end{corollary}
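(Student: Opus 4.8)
The plan is to obtain \eqref{wpeq11} and the three representations of $f_2(a,q)$ by letting $z\to\infty$ in Theorem \ref{t2}, treating the identity \eqref{wpeq10} term by term. The single computation driving everything is the elementary limit
\[
\lim_{z\to\infty}\frac{(z;q)_n}{z^n}=(-1)^n q^{n(n-1)/2},
\]
obtained by keeping the leading power of $z$ in each factor $1-zq^{j}$ of $(z;q)_n$, together with the observation that $(qa/z;q)_n\to 1$ as $z\to\infty$.

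First I would handle the two sums on the left of \eqref{wpeq10}. In the $\beta_n$-sum the factor $(z;q)_n(qa/z)^n$ tends to $(-1)^n q^{n(n-1)/2}(qa)^n=(-a)^n q^{n(n+1)/2}$, so termwise this sum converges to $\sum_{n\geq 1}(q;q)_{n-1}(-a)^n q^{n(n+1)/2}\beta_n$. In the $\alpha_n$-sum the extra factor $(qa/z;q)_n\to 1$, and the same computation yields $\sum_{n\geq 1}(q;q)_{n-1}(-a)^n q^{n(n+1)/2}\alpha_n/(qa;q)_n$; together these are exactly the left side of \eqref{wpeq11}.

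Next I would take the limit in each of the three expressions for $f_1(a,z,q)$ given in Theorem \ref{t2}. For the first representation, $(z;q)_n(-a/z)^n\to q^{n(n-1)/2}a^n$, which combined with the existing $q^{n(n+1)/2}$ gives $q^{n^2}a^n$; then I would simplify the very-well-poised ratio $(q\sqrt{a},-q\sqrt{a};q)_n/(\sqrt{a},-\sqrt{a};q)_n=(1-aq^{2n})/(1-a)$ together with the telescoping quotient $(a;q)_n/(qa;q)_n=(1-a)/(1-aq^n)$, producing $-\sum_{n\geq 1}(1-aq^{2n})q^{n^2}a^n/\bigl((1-aq^n)(1-q^n)\bigr)$. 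For the second representation, $(z;q)_n(qa/z)^n\to(-a)^n q^{n(n+1)/2}$ directly yields $\sum_{n\geq 1}q^{n(n+1)/2}(-a)^n/\bigl((qa;q)_n(1-q^n)\bigr)$. For the third (Lambert-series) representation, each term $aq^n/z$ of the first sum tends to $0$, so that sum vanishes and only $-\sum_{n\geq 1}aq^n/(1-aq^n)$ survives. These are precisely the three stated forms of $f_2(a,q)$.

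The main obstacle is not algebraic but analytic: justifying the interchange of the limit $z\to\infty$ with the infinite summations. Since $|q|<1$, the factors $q^{n(n-1)/2}$ (equivalently $q^{n^2}$ or $q^{n(n+1)/2}$ after recombination) force geometric-or-faster decay, so for large $z$ each series is dominated uniformly by a convergent series and the termwise limits may be summed; this is exactly the content of the phrase ``subject to suitable convergence conditions.'' With that justification in place, matching the four limits against \eqref{wpeq11} and the displayed forms of $f_2(a,q)$ completes the proof.
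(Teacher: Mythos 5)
Your proposal is correct and is precisely the paper's argument: the paper derives Corollary~\ref{c4} from Theorem~\ref{t2} with the single remark ``Upon letting $z \to \infty$ the following identity results,'' and your termwise limits (using $(z;q)_n \sim (-z)^n q^{n(n-1)/2}$ and $(qa/z;q)_n \to 1$) supply exactly the details that remark suppresses. The simplifications you perform on the first representation, namely $(q\sqrt{a},-q\sqrt{a};q)_n/(\sqrt{a},-\sqrt{a};q)_n = (1-aq^{2n})/(1-a)$ and $(a;q)_n/(qa;q)_n = (1-a)/(1-aq^n)$, are the right ones and yield the stated forms of $f_2(a,q)$.
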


As is well known, many theta products/series can be represented as
sums of Lambert series of the type immediately above. The other
representations of $f_2(a,q)$ now let these theta functions be
represented in two different ways as basic hypergeometric series.

Let \[ a(q):=\sum_{m,n=-\infty}^{\infty}q^{m^2+mn+n^2}.
\]Here we are using the notation  for this series employed in \cite{BB91}.
\begin{corollary}\label{corlam1}
{\allowdisplaybreaks
\begin{align}\label{corlameq1}
a(q)&=1-6\sum_{n=1}^{\infty}\frac{(-1)^n
q^{(3n^2-n)/2}}{(q;q^3)_n(1-q^{3n})}+6\sum_{n=1}^{\infty}\frac{(-1)^n
q^{(3n^2+n)/2}}{(q^2;q^3)_n(1-q^{3n})},\\
&=1+6\sum_{n=1}^{\infty}\frac{
q^{3n^2-2n}(1-q^{6n-2})}{(1-q^{3n-2})(1-q^{3n})}
-6\sum_{n=1}^{\infty}\frac{
q^{3n^2-n}(1-q^{6n-1})}{(1-q^{3n-1})(1-q^{3n})}.\notag
\end{align}
}
\end{corollary}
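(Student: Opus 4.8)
The engine for this corollary is Corollary \ref{c4}, which supplies \emph{three} equivalent expressions for the single quantity $f_2(a,q)$: a very-well-poised series, a second unilateral basic hypergeometric series, and the Lambert series $-\sum_{n\ge1} aq^n/(1-aq^n)$. The plan is to start from a \emph{known} Lambert series expansion of $a(q)$, recognize each Lambert series there as a value of this third representation of $f_2$, and then re-expand those same $f_2$-values using the first two representations in Corollary \ref{c4}. Since all three expressions denote the same function, the two displayed basic hypergeometric series for $a(q)$ follow at once.

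Concretely, I would begin from the classical cubic-theta Lambert series
\[
a(q)=1+6\sum_{n=1}^{\infty}\frac{q^{3n-2}}{1-q^{3n-2}}-6\sum_{n=1}^{\infty}\frac{q^{3n-1}}{1-q^{3n-1}},
\]
which records that the number of representations of $m$ by $x^2+xy+y^2$ is $6$ times the excess of divisors $\equiv1$ over divisors $\equiv2\pmod 3$. Taking the summation base to be $q^3$ and comparing with $f_2(a,q)=-\sum_{n\ge1}aq^n/(1-aq^n)$, the substitutions $a=1/q^2$ and $a=1/q$ give
\[
\sum_{n=1}^{\infty}\frac{q^{3n-2}}{1-q^{3n-2}}=-f_2\left(\frac{1}{q^2},q^3\right),\qquad
\sum_{n=1}^{\infty}\frac{q^{3n-1}}{1-q^{3n-1}}=-f_2\left(\frac{1}{q},q^3\right),
\]
so that $a(q)=1-6f_2(1/q^2,q^3)+6f_2(1/q,q^3)$.

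It then remains only to substitute $(a,q)\mapsto(1/q^2,q^3)$ and $(a,q)\mapsto(1/q,q^3)$ into the other two representations of $f_2$ and to collect exponents. Using the middle representation $f_2(a,q)=\sum_{n\ge1}q^{n(n+1)/2}(-a)^n/\big((qa;q)_n(1-q^n)\big)$, the exponent $3n(n+1)/2-2n$ collapses to $(3n^2-n)/2$ and $3n(n+1)/2-n$ to $(3n^2+n)/2$, while the shifted products become $(q;q^3)_n$ and $(q^2;q^3)_n$; this yields the first displayed formula. Using instead $f_2(a,q)=-\sum_{n\ge1}(1-aq^{2n})q^{n^2}a^n/\big((1-aq^n)(1-q^n)\big)$, the numerators become $(1-q^{6n-2})q^{3n^2-2n}$ and $(1-q^{6n-1})q^{3n^2-n}$ and the denominators $(1-q^{3n-2})(1-q^{3n})$ and $(1-q^{3n-1})(1-q^{3n})$, giving the second displayed formula.

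The only non-formal ingredient is the classical Lambert series for $a(q)$ in the first display; in view of the paper's standing remark that such theta series ``can be represented as sums of Lambert series of the type immediately above,'' this is the step I would cite (e.g.\ from \cite{BB91}) rather than reprove, and everything after it is bookkeeping of half-integer exponents and $q$-shifted factorials. The main, and quite minor, obstacle is keeping the base change from $q$ to $q^3$ and the specializations $a=q^{-1},q^{-2}$ straight; convergence is not a genuine issue, since the resulting Lambert and $q^{3n^2}$-type series all converge for $|q|<1$.
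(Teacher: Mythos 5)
Your proposal is correct and follows essentially the same route as the paper: start from the classical Lambert series for $a(q)$ (the paper quotes it as Entry 18.2.8 of Ramanujan's Lost Notebook, in Legendre-symbol form, which is equivalent to your congruence-class form), recognize the two Lambert series as $-f_2(1/q^2,q^3)$ and $-f_2(1/q,q^3)$ via \eqref{f2eq}, and then substitute the other two representations of $f_2$ from Corollary \ref{c4}. The exponent and $q$-shifted-factorial bookkeeping in your write-up matches the stated identities.
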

\begin{proof}
The following result is
\textbf{Entry 18.2.8} of Ramanujan's Lost Notebook (see
\cite[page 402]{AB05}):
\begin{align}\label{corlameq2}
a(q)&=1+6
\sum_{n=1}^{\infty}\left(\frac{n}{3}\right)\frac{q^n}{1-q^n}\\
&=1+6\sum_{n=1}^{\infty}\frac{q^{-2}q^{3n}}{1-q^{-2}q^{3n}}
-6\sum_{n=1}^{\infty}\frac{q^{-1}q^{3n}}{1-q^{-1}q^{3n}}.\notag
\end{align}
Use \eqref{f2eq} (with $q$ replaced with $q^3$ and $a=q^{-1}$ and
$a=q^{-2}$, respectively) to replace each of the Lambert series
with, in turn, each of the other two representations of $f_2(a,q^3)$,
and the result follows.
\end{proof}

Remark: It is clear that a quite general statement concerning
$a(q)$ may be deduced from \eqref{corlameq1} by a similar argument.
Indeed, if $(\alpha_n(a,q)$,  $\beta_n(a,q))$ is \emph{any} Bailey
pair in which $a$ is a free parameter, then
\begin{multline}\label{corlameq3}
a(q)=1+6\sum_{n=1}^{\infty} (q^3;q^3)_{n-1}\left( - 1\right
)^{n}q^{(3n^2+n)/2} \beta_n(q^{-1},q^3)\\
-6\sum_{n=1}^{\infty}
(q^3;q^3)_{n-1}\left( - 1\right )^{n}q^{(3n^2-n)/2}
\beta_n(q^{-2},q^3)\\
-6\sum_{n=1}^{\infty}\frac{(q^3;q^3)_{n-1}\left (-
1\right)^{n}q^{(3n^2+n)/2}\alpha_n(q^{-1},q^3)}{(q^2
;q^3)_n}\\
+6\sum_{n=1}^{\infty}\frac{(q^3;q^3)_{n-1}\left (-
1\right)^{n}q^{(3n^2-n)/2}\alpha_n(q^{-2},q^3)}{(q ;q^3)_n}.
\end{multline}
As an example, if we insert the Bailey pair of Slater \cite[Equation
(4.1), page 469]{S51},
\begin{align*}
%\label{wpS}
\alpha_n(a,q)&=\frac{(1-a q^{2n})(a,c,d;q)_n}{(1-a)(aq/c,aq/d,
q;q)_n}\left( \frac{-a}{cd}\right)^n q^{(n^2+n)/2},\\
\beta_n(a,q)&=\frac{(aq/cd;q)_n}{(aq/c,aq/d,q;q)_n}, \notag
\end{align*}
in \eqref{corlameq3}, we get, for any values for $c$ and $d$ that do
not make any denominator vanish, that {\allowdisplaybreaks
\begin{multline}\label{aqcdeq}
a(q)=1+6\sum_{n=1}^{\infty}
\frac{(q^3;q^3)_{n-1}(q^2/cd;q^3)_n\left( - 1\right
)^{n}q^{(3n^2+n)/2}}{(q^2/c,q^2/d,q^3;q^3)_n} \\
-6\sum_{n=1}^{\infty} \frac{(q^3;q^3)_{n-1}(q/cd;q^3)_n\left( -
1\right
)^{n}q^{(3n^2-n)/2}}{(q/c,q/d,q^3;q^3)_n}\\
-6\sum_{n=1}^{\infty}\frac{(1-q^{6n-1})(q^3;q^3)_{n-1}(1/q,c,d;q^3)_n
q^{3n^2+n}}{(1-1/q)(q^2/c,q^2/d,q^2,q^3
;q^3)_n c^n d^n}\\
+6\sum_{n=1}^{\infty}\frac{(1-q^{6n-2})(q^3;q^3)_{n-1}(1/q^2,c,d;q^3)_n
q^{3n^2-n}}{(1-1/q^2)(q/c,q/d,q,q^3
;q^3)_n c^n d^n}.
\end{multline}
}

If we let $c, d\to \infty$ in this identity we get that
\begin{multline}
a(q)=1+6\sum_{n=1}^{\infty} \frac{\left( - 1\right
)^{n}q^{(3n^2+n)/2}}{1-q^{3n}}  -6\sum_{n=1}^{\infty} \frac{\left( -
1\right
)^{n}q^{(3n^2-n)/2}}{1-q^{3n}}\\
-6\sum_{n=1}^{\infty}\frac{(1-q^{6n-1})
q^{6n^2-2n}}{(1-q^{3n-1})(1-q^{3n})}
+6\sum_{n=1}^{\infty}\frac{(1-q^{6n-2})
q^{6n^2-4n}}{(1-q^{3n-2})(1-q^{3n})}.
\end{multline}

A similar situation will hold for some of the other identities given
below. Recall (see \cite[page 36]{B91})
\begin{equation}\label{thetaeq}
\phi(q):=\sum_{n=-\infty}^{\infty}q^{n^2}=(-q,-q,q^2;q^2)_{\infty}.
\end{equation}

\begin{corollary}\label{corlam2}
\begin{align}
\phi(q)^2&=1-4\sum_{n=1}^{\infty}\frac{(-1)^nq^{2n^2-n}}{(q;q^4)_n(1-q^{4n})}
+ 4\sum_{n=1}^{\infty}\frac{(-1)^nq^{2n^2+n}}{(q^3;q^4)_n(1-q^{4n})},\\
&=1+4\sum_{n=1}^{\infty}\frac{(1-q^{8n-3})q^{4n^2-3n}}{(1-q^{4n-3})(1-q^{4n})}
-4\sum_{n=1}^{\infty}\frac{(1-q^{8n-1})q^{4n^2-n}}{(1-q^{4n-1})(1-q^{4n})}.\notag
\end{align}
For any values for $c$ and $d$ that do not make any
denominator vanish,
\begin{multline}\label{corlam2eq2}
\phi(q)^2=1+4\sum_{n=1}^{\infty}
\frac{(q^4;q^4)_{n-1}(q^3/cd;q^4)_n\left( - 1\right
)^{n}q^{2n^2+n}}{(q^3/c,q^3/d,q^4;q^4)_n} \\
-4\sum_{n=1}^{\infty} \frac{(q^4;q^4)_{n-1}(q/cd;q^4)_n\left( -
1\right
)^{n}q^{2n^2-n}}{(q/c,q/d,q^4;q^4)_n}\\
-4\sum_{n=1}^{\infty}\frac{(1-q^{8n-1})(q^4;q^4)_{n-1}(1/q,c,d;q^4)_n
q^{4n^2+2n}}{(1-1/q)(q^3/c,q^3/d,q^3,q^4
;q^4)_n c^n d^n}\\
+4\sum_{n=1}^{\infty}\frac{(1-q^{8n-3})(q^4;q^4)_{n-1}(1/q^3,c,d;q^4)_n
q^{4n^2-2n}}{(1-1/q^3)(q/c,q/d,q,q^4
;q^4)_n c^n d^n}.
\end{multline}
\end{corollary}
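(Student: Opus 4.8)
The plan is to reproduce verbatim the argument used for $a(q)$ in Corollary \ref{corlam1}, with the base $q^3$ there replaced by $q^4$. The only external ingredient is the classical Lambert series for $\phi(q)^2$ coming from the two-squares theorem: writing the nontrivial character modulo $4$ over the odd integers gives
\[
\phi(q)^2 = 1 + 4\sum_{n=1}^{\infty}\frac{q^{4n-3}}{1-q^{4n-3}} - 4\sum_{n=1}^{\infty}\frac{q^{4n-1}}{1-q^{4n-1}},
\]
where the two sums collect the residues $1$ and $3$ modulo $4$ (the opposite signs coming from the character). Rewriting $q^{4n-3}=q^{-3}(q^4)^n$ and $q^{4n-1}=q^{-1}(q^4)^n$ exhibits each sum as a Lambert series of exactly the shape appearing in the third representation of $f_2$ in \eqref{f2eq}. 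Matching signs (recall $\sum a q^n/(1-aq^n) = -f_2(a,q)$), this reads
\[
\phi(q)^2 = 1 - 4\,f_2(q^{-3},q^4) + 4\,f_2(q^{-1},q^4),
\]
obtained from Corollary \ref{c4} by replacing $q$ with $q^4$ and specializing $a=q^{-3}$ and $a=q^{-1}$.

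With this identity established, the first two displayed formulas in Corollary \ref{corlam2} follow at once by substituting, in turn, the remaining two representations of $f_2(a,q)$ from Corollary \ref{c4} (again at base $q^4$ and $a=q^{-3},q^{-1}$). The only care needed is bookkeeping of $q$-powers: for the hypergeometric form one uses $(-a)^n(q^4)^{n(n+1)/2}=(-1)^n q^{2n^2\mp n}$ together with $(q^4a;q^4)_n=(q;q^4)_n$ or $(q^3;q^4)_n$, and for the bilateral form one checks $1-a q^{8n}=1-q^{8n-3}$ or $1-q^{8n-1}$ and $1-aq^{4n}=1-q^{4n-3}$ or $1-q^{4n-1}$. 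These reproduce the two displayed lines exactly.

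For the general identity \eqref{corlam2eq2} I would proceed as in the derivation of \eqref{corlameq3}: instead of the explicit evaluations of $f_2$, insert the arbitrary-Bailey-pair representation of $f_2(a,q)$ furnished by \eqref{wpeq11}, again at base $q^4$ and $a=q^{-3},q^{-1}$. Since \eqref{wpeq11} has the structure $f_2=[\beta\text{-sum}]-[\alpha\text{-sum}]$, the decomposition $\phi(q)^2=1-4f_2(q^{-3},q^4)+4f_2(q^{-1},q^4)$ produces four sums over $\beta_n(q^{-1},q^4),\beta_n(q^{-3},q^4),\alpha_n(q^{-1},q^4),\alpha_n(q^{-3},q^4)$ with signs $+4,-4,-4,+4$. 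Substituting Slater's Bailey pair then gives the four explicit sums: the $\beta$ terms yield the first two (with no surviving $(q^4a;q^4)_n$ factor, since $\beta_n$ carries no such denominator to cancel against it in the way needed), while in the $\alpha$ terms the leftover factor $(q^4a;q^4)_n=(q^3;q^4)_n$ or $(q;q^4)_n$ from \eqref{wpeq11} merges with the denominator of $\alpha_n$ to give the displayed products $(q^3/c,q^3/d,q^3,q^4;q^4)_n$ and $(q/c,q/d,q,q^4;q^4)_n$. I expect no genuine obstacle here: the entire mechanism is already supplied by Corollary \ref{c4}, and the one point demanding attention is the index correspondence $n\equiv 1,3 \pmod 4 \leftrightarrow 4n-3,4n-1 \leftrightarrow a=q^{-3},q^{-1}$, together with consistent sign tracking when passing between $\sum aq^n/(1-aq^n)$ and $f_2$.
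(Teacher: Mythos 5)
Your proposal is correct and follows essentially the same route as the paper: the paper's proof simply cites the classical Lambert series $\phi(q)^2=1+4\sum q^{4n-3}/(1-q^{4n-3})-4\sum q^{4n-1}/(1-q^{4n-1})$ (Entry 8(i) in \cite{B91}, equivalent to your two-squares formulation) and then states that the remaining arguments parallel those for $a(q)$, which is precisely the substitution scheme you carry out, namely $\phi(q)^2=1-4f_2(q^{-3},q^4)+4f_2(q^{-1},q^4)$ followed by the three representations of $f_2$ from Corollary \ref{c4} and, for \eqref{corlam2eq2}, the insertion of Slater's Bailey pair into the analogue of \eqref{corlameq3}. Your sign bookkeeping and the merging of the $(q^4a;q^4)_n$ factor into the $\alpha$-term denominators check out against the displayed identity.
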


\begin{proof}
By \textbf{Entry 8 (i)} in chapter 17 of \cite{B91},
\[
\phi(q)^2=1+4 \sum_{n=1}^{\infty}\frac{q^{4n-3}}{1-q^{4n-3}}-4 \sum_{n=1}^{\infty}\frac{q^{4n-1}}{1-q^{4n-1}}.
\]
We omit the remainder of the arguments, since they parallel those for the identities involving $a(q)$ above.
\end{proof}

If we let $c,d\to \infty$ in \eqref{corlam2eq2}, we get the identity
\begin{multline}\label{corlam2eq22}
\phi(q)^2=1+4\sum_{n=1}^{\infty} \frac{\left( - 1\right
)^{n}q^{2n^2+n}}{1-q^{4n}}  -4\sum_{n=1}^{\infty} \frac{\left( -
1\right
)^{n}q^{2n^2-n}}{1-q^{4n}}\\
-4\sum_{n=1}^{\infty}\frac{(1-q^{8n-1})
q^{8n^2-2n}}{(1-q^{4n-1})(1-q^{4n})}
+4\sum_{n=1}^{\infty}\frac{(1-q^{8n-3})
q^{8n^2-6n}}{(1-q^{4n-3})(1-q^{4n})}.
\end{multline}

\begin{corollary}\label{cz-1}
If $(\alpha_n, \beta_n)$ is a Bailey pair with respect to $a$, then subject to suitable convergence conditions,
{\allowdisplaybreaks
\begin{equation}\label{wpeq12}
\sum_{n=1}^{\infty} (q^2;q^2)_{n-1}\left( -q a\right )^{n} \beta_n - \sum_{n=1}^{\infty}\frac{(q^2;q^2)_{n-1}\left (-q a\right)^{n}}{(q^2 a^2 ;q^2)_n}\alpha_n=f_3(a,q)
%\\
,
\end{equation}
}where
\begin{align*}
f_3(a,q)&=-\sum_{n=1}^{\infty} \frac{(q\sqrt{a},-q\sqrt{a},a;q)_{n}(-q;q)_{n-1}q^{n(n+1)/2}a^{n}}
{(\sqrt{a},-\sqrt{a};q)_{n}(q^2 a^2;q^2)_{n}(1-q^n)} \\
&=\sum_{n=1}^{\infty} \frac{(-q;q)_{n-1}\left( -q a\right )^{n}}
{(q a;q)_{n}(1-q^n)}\\
&=-\sum_{n=1}^{\infty}\frac{a q^n}{1-a^2q^{2n}}.
\end{align*}
\end{corollary}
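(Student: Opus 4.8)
The plan is to specialize Theorem~\ref{t2} at $z=-1$, in the same spirit in which Corollary~\ref{c4} was obtained by letting $z\to\infty$ in \eqref{wpeq10}. Under this specialization the three $z$-dependent Pochhammer factors all collapse into base-$q^2$ objects: since $(-1;q)_n = 2(-q;q)_{n-1}$ (the leading factor is $1-z=2$, followed by $(zq;q)_{n-1}=(-q;q)_{n-1}$), and since $(q;q)_{n-1}(-q;q)_{n-1}=(q^2;q^2)_{n-1}$ and $(qa;q)_n(-qa;q)_n=(q^2a^2;q^2)_n$, while $qa/z=-qa$, the left-hand side of \eqref{wpeq10} becomes
\[
2\sum_{n=1}^{\infty} (q^2;q^2)_{n-1}(-qa)^n \beta_n -2\sum_{n=1}^{\infty}\frac{(q^2;q^2)_{n-1}}{(q^2a^2;q^2)_n}(-qa)^n\alpha_n,
\]
which is precisely twice the left-hand side of \eqref{wpeq12}.

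It then remains to show that the right-hand side $f_1(a,-1,q)$ equals $2f_3(a,q)$, and I would do this by substituting $z=-1$ into each of the three representations of $f_1$ recorded in Theorem~\ref{t2}. In the first two representations the factor $(-1;q)_n=2(-q;q)_{n-1}$ — together with $(qa,-qa;q)_n=(q^2a^2;q^2)_n$ in the first — turns them directly into $2$ times the first two stated forms of $f_3$. For the Lambert-series form, setting $z=-1$ gives $f_1(a,-1,q)=-\sum_{n=1}^{\infty}\frac{aq^n}{1+aq^n}-\sum_{n=1}^{\infty}\frac{aq^n}{1-aq^n}$, and the elementary combination $\frac{aq^n}{1+aq^n}+\frac{aq^n}{1-aq^n}=\frac{2aq^n}{1-a^2q^{2n}}$ collapses this to $-2\sum_{n=1}^{\infty}\frac{aq^n}{1-a^2q^{2n}}=2f_3(a,q)$.

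Dividing the resulting identity through by $2$ then yields \eqref{wpeq12} together with all three forms of $f_3(a,q)$. There is no deep obstacle here; the step demanding the most care is the uniform factor-of-$2$ bookkeeping, ensuring that the doubling coming from $(-1;q)_n=2(-q;q)_{n-1}$ appears on \emph{both} sides so that it cancels cleanly, and checking that the choice $z=-1$ is consistent with the convergence hypothesis $|qa|<|z|$ underlying $f_1$, i.e.\ that $|qa|<1$.
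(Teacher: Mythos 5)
Your proposal is correct and follows exactly the paper's own route: the paper proves Corollary~\ref{cz-1} by letting $z\to -1$ in \eqref{wpeq10} and simplifying, which is precisely your specialization, including the factor-of-$2$ cancellation from $(-1;q)_n = 2(-q;q)_{n-1}$ and the pairing $(qa,-qa;q)_n=(q^2a^2;q^2)_n$. Your verification that all three representations of $f_1(a,-1,q)$ collapse to $2f_3(a,q)$ fills in the "and simplify" step that the paper leaves to the reader.
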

\begin{proof}
Let $z \to -1$ in \eqref{wpeq10} and simplify.
\end{proof}
One reason we single out this special case is that many theta products/series can also be
expressed in terms of Lambert series of the type just above. We consider one example.
Recall (see \cite[page 36]{B91}) that
\[
\psi (q):=\sum_{n=0}^{\infty}q^{n(n+1)/2}=\frac{(q^2;q^2)_{\infty}}{(q;q^2)_{\infty}.}
\]
By \textbf{Entry 34 (p.284)} in chapter 36 of Ramanujan's notebooks (see \cite[page 374]{B98}),
\begin{equation}\label{psieq1}
q\frac{\psi^3(q^3)}{\psi(q)}=\sum_{n=1}^{\infty}\frac{q^{3n-2}}{1-q^{6n-4}} -
\sum_{n=1}^{\infty}\frac{q^{3n-1}}{1-q^{6n-2}}.
\end{equation}
Upon replacing $q$ with $q^3$ and $a$ with $q^{-2}$ and then $a$ with $q^{-1}$ in Corollary \ref{cz-1} and combining the various serious appropriately, we get the following identities.
\begin{corollary}\label{cpsi}
\begin{align}\label{cpsieq1}
q\frac{\psi^3(q^3)}{\psi(q)}&=\sum_{n=1}^{\infty}
\frac{(1-q^{6n-2})(-q^3;q^3)_{n-1}q^{(3n^2-n)/2}}{(1-q^{3n-2})(1-q^{3n})(-q;q^3)_n}\\
&\phantom{asdasdasdasdas}
-\sum_{n=1}^{\infty}
\frac{(1-q^{6n-1})(-q^3;q^3)_{n-1} q^{(3n^2+n)/2}}{(1-q^{3n-1})(1-q^{3n})(-q^2;q^3)_n},\notag\\
&=\sum_{n=1}^{\infty}
\frac{(-q^3;q^3)_{n-1}(-1)^n q^{2 n}}{(1-q^{3n})(q^2;q^3)_n}
%\\
%&\phantom{asdasdasdasdas}
-\sum_{n=1}^{\infty}
\frac{(-q^3;q^3)_{n-1}(-1)^n q^{n}}{(1-q^{3n})(q;q^3)_n}.\notag
\end{align}
If $(\alpha_n(a,q), \beta_n(a,q))$ is a Bailey pair in which $a$ is a free parameter, then
\begin{multline}\label{psieq2}
q\frac{\psi^3(q^3)}{\psi(q)}\\=\sum_{n=1}^{\infty}(q^6;q^6)_{n-1}(-q^2)^n \beta_n(1/q,q^3)
-\sum_{n=1}^{\infty}(q^6;q^6)_{n-1}(-q)^n \beta_n(1/q^2,q^3)\\
+\sum_{n=1}^{\infty}\frac{(q^6;q^6)_{n-1}(-q)^n }{(q^2;q^6)_n}\alpha_n(1/q^2,q^3)
-\sum_{n=1}^{\infty}\frac{(q^6;q^6)_{n-1}(-q^2)^n }{(q^4;q^6)_n}\alpha_n(1/q,q^3).
\end{multline}
If $d \not=q^{3n\pm1}$, then
{\allowdisplaybreaks\begin{multline}\label{psieq3}
q\frac{\psi^3(q^3)}{\psi(q)}=
\sum_{n=1}^{\infty}\frac{(q^6;q^6)_{n-1}(q^2/d;q^6)_n(-q^2)^n }
{(q^2;q^6)_n(q^2/d,q^3;q^3)_n}\\
-\sum_{n=1}^{\infty}\frac{(q^6;q^6)_{n-1}(q/d;q^6)_n(-q)^n }
{(q;q^6)_n(q/d,q^3;q^3)_n}\\
+\sum_{n=1}^{\infty}\frac{(1-q^{12n-2})(q^6;q^6)_{2n-1}(1/q^2,d;q^6)_n q^{6n^2} }{(1-1/q^2)(q^2;q^6)_{2n}(q^4/d,q^6;q^6)_n d^n}\\
-\sum_{n=1}^{\infty}\frac{(1-q^{12n-1})(q^6;q^6)_{2n-1}(1/q,d;q^6)_n q^{6n^2+3n} }{(1-1/q)(q^4;q^6)_{2n}(q^5/d,q^6;q^6)_n d^n}.
\end{multline}}
\begin{multline}\label{psieq4}
q\frac{\psi^3(q^3)}{\psi(q)}=
\sum_{n=1}^{\infty}\frac{(q^6;q^6)_{n-1}(-q^2)^n }
{(q^2;q^6)_n(q^3;q^3)_n}
-\sum_{n=1}^{\infty}\frac{(q^6;q^6)_{n-1}(-q)^n }
{(q;q^6)_n(q^3;q^3)_n}\\
+\sum_{n=1}^{\infty}\frac{(1-q^{12n-2})(q^6;q^6)_{2n-1}(1/q^2;q^6)_n (-1)^n q^{9n^2-3n} }{(1-1/q^2)(q^2;q^6)_{2n}(q^6;q^6)_n}\\
-\sum_{n=1}^{\infty}\frac{(1-q^{12n-1})(q^6;q^6)_{2n-1}(1/q;q^6)_n (-1)^n q^{9n^2} }{(1-1/q)(q^4;q^6)_{2n}(q^6;q^6)_n}.
\end{multline}
\end{corollary}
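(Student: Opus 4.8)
The plan is to reduce the whole corollary to the single master relation
\[
q\,\frac{\psi^3(q^3)}{\psi(q)} = f_3(q^{-1},q^3) - f_3(q^{-2},q^3),
\]
and then to feed the three representations of $f_3$ supplied by Corollary \ref{cz-1} into its right-hand side. To obtain the master relation I would start from Ramanujan's Lambert-series identity \eqref{psieq1} and use the third (Lambert-series) representation of $f_3$ in Corollary \ref{cz-1}, read with $q$ replaced by $q^3$, namely $f_3(a,q^3)=-\sum_{n\ge1}aq^{3n}/(1-a^2q^{6n})$. Taking $a=q^{-2}$ turns this series into $-\sum q^{3n-2}/(1-q^{6n-4})$ and taking $a=q^{-1}$ turns it into $-\sum q^{3n-1}/(1-q^{6n-2})$, so \eqref{psieq1} is precisely the master relation above.

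With the master relation in hand, the two displayed forms in \eqref{cpsieq1} come from substituting, in turn, the second and first representations of $f_3(a,q^3)$ from Corollary \ref{cz-1} and setting $a=q^{-1},q^{-2}$. The second representation gives the lower line of \eqref{cpsieq1} at once. For the upper line I would first clear the half-integer powers via $(q^3\sqrt a,-q^3\sqrt a;q^3)_n/(\sqrt a,-\sqrt a;q^3)_n=(1-aq^{6n})/(1-a)$, and then use the elementary collapses $(q^{-2};q^3)_n/(1-q^{-2})=(q;q^3)_{n-1}$ together with $(q;q^3)_{n-1}(1-q^{3n-2})(-q;q^3)_n=(q^2;q^6)_n$ (and their $a=q^{-1}$ analogues, $(q^{-1};q^3)_n/(1-q^{-1})=(q^2;q^3)_{n-1}$ and $(q^2;q^3)_n(-q^2;q^3)_n=(q^4;q^6)_n$) to put each summand in the stated shape. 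The identity \eqref{psieq2} is produced the same way, but using instead the general Bailey-pair representation \eqref{wpeq12} of $f_3(a,q)$: read with $q\to q^3$, the value $a=q^{-1}$ gives $-q^3a=-q^2$ and $q^6a^2=q^4$, while $a=q^{-2}$ gives $-q^3a=-q$ and $q^6a^2=q^2$, and subtracting the two instances reproduces \eqref{psieq2} term by term.

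For \eqref{psieq3} I would insert into \eqref{psieq2} the specific Bailey pair (with respect to $a$, base $q^3$, free parameter $d$) whose $\beta$-sequence is
\[
\beta_n(a,q^3)=\frac{(aq^3/d;q^6)_n}{(aq^3;q^6)_n\,(aq^3/d;q^3)_n\,(q^3;q^3)_n},
\]
together with its companion $\alpha$-sequence. This is a quadratic change-of-base pair: although it is a genuine base-$q^3$ Bailey pair, the parameter $a$ enters through base-$q^6$ factorials, and it is exactly this feature that manufactures the very-well-poised factor $1-q^{12n-2}$, the Gaussian factor $q^{6n^2}$, and the index-$2n$ symbols $(q^6;q^6)_{2n-1}$ and $(q^2;q^6)_{2n}$ that appear in \eqref{psieq3}. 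Substituting $a=q^{-1}$ and $a=q^{-2}$ and collecting the four resulting sums then gives \eqref{psieq3}, and letting $d\to\infty$ (so that $(aq^3/d;q^6)_n,(q^4/d;q^6)_n\to1$ and $(d;q^6)_n/d^n\to(-1)^nq^{3n^2-3n}$) collapses \eqref{psieq3} to \eqref{psieq4}.

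The main obstacle is the middle step of the previous paragraph: producing and justifying the change-of-base Bailey pair. Concretely, I must verify that the stated $\beta_n(a,q^3)$ and its $\alpha$-partner satisfy the $k=0$ case of \eqref{WPpair} in base $q^3$ — this is most naturally done by applying a quadratic change of base to a very-well-poised (Slater-type) pair in base $q^6$ — and then carry out the $q$-Pochhammer bookkeeping that rewrites base-$q^3$ products as base-$q^6$ products and reconciles the $n$- and $2n$-indexed symbols. Everything else (\eqref{cpsieq1}, \eqref{psieq2}, and the $d\to\infty$ passage to \eqref{psieq4}) is routine manipulation of $q$-shifted factorials once the master relation and this single Bailey pair are in place.
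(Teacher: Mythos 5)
Your proposal is correct and follows essentially the same route as the paper: Ramanujan's Lambert-series identity \eqref{psieq1} combined with the three representations of $f_3$ in Corollary \ref{cz-1} (with $q\to q^3$, $a=q^{-1},q^{-2}$) yields \eqref{cpsieq1} and \eqref{psieq2}, then the specific change-of-base Bailey pair is inserted into \eqref{psieq2} for \eqref{psieq3}, and $d\to\infty$ gives \eqref{psieq4}. The only difference is that the pair whose $\beta_n$ you wrote down (and whose $\alpha$-partner you propose to verify by a quadratic change of base) is simply quoted by the paper from Corollary 2.13 of \cite{MSZ09}, so the verification you flag as the main obstacle is handled there by citation.
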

\begin{proof}
Identities \eqref{cpsieq1} - \eqref{psieq2} follow directly from Corollary \ref{cpsi}.
The identity at \eqref{psieq3} follows upon inserting the Bailey pair
(see Corollary 2.13 in \cite{MSZ09})
\begin{align*}
%\label{sgen660}
\alpha_{2r}&=
\frac{\displaystyle{1-a q^{4r}}}{\displaystyle{1-a}}
\frac{\left(\displaystyle{a,d;q^2}\right)_r a^r\displaystyle{q^{2r^2}}}
   {\left(\displaystyle{a q^2/d,q^2;q^2}\right)_r\displaystyle{d^r}},\\
\alpha_{2r-1}&=0,\notag
\\
\beta_n&= \frac{(a q/d;q^2)_n}{(a q;q^2)_{n}(aq/d,q;q)_n},\,\, \text{ with respect to $a=a$}\notag
\end{align*}
into \eqref{psieq2}, and \eqref{psieq4} is a consequence of letting $d \to \infty$ in \eqref{psieq3}.
\end{proof}

\section{The Lambert series $\sum_{n=1}^{\infty} \frac{a q^n}{1-a q^n}$ again}
Define
\begin{equation*}
L_a(q):=\sum_{n=1}^{\infty} \frac{a q^n}{1-a q^n}.
\end{equation*}
From Corollary \ref{c4} it can be seen that $L_a(q)$ can be
variously represented as {\allowdisplaybreaks\begin{align}
\label{laeq} &L_a(q)=\sum_{n=1}^{\infty} \frac{(1-a q^{2n})q^{n^2}
a^{n}}
{(1-a q^{n})(1-q^n)}\\
&=-\sum_{n=1}^{\infty} \frac{q^{n(n+1)/2}\left(-a\right)^{n}}
{(q a;q)_{n}(1-q^n)}\notag\\
&=\sum_{n=1}^{\infty}\frac{(q;q)_{n-1}\left (-
a\right)^{n}q^{n(n+1)/2}}{(q a ;q)_n}\alpha_n-\sum_{n=1}^{\infty} (q;q)_{n-1}\left( - a\right )^{n}q^{n(n+1)/2}
\beta_n, \notag
\end{align}}
where $(\alpha_n, \beta_n)$ is a Bailey pair with respect to $a$.
We subsequently noticed that it was possible to give two additional
representations of $L_a(q)$.
\begin{corollary}\label{claq2}
\begin{align}\label{laeq2}
L_a(q)&=\frac{-1}{(aq;q)_{\infty}}\sum_{n=1}^{\infty}\frac{n (-a)^n q^{n(n+1)/2}}{(q;q)_n},\\
&=(aq;q)_{\infty}\sum_{n=1}^{\infty}\frac{n a^n q^n}{(q;q)_n}. \notag
\end{align}
\end{corollary}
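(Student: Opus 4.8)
The plan is to recognize $L_a(q)$ as a logarithmic derivative of the infinite product $(aq;q)_{\infty}$ and then feed in the two special cases of the $q$-binomial theorem recorded at \eqref{bineqab}. The starting point is the observation that
\[
\log (aq;q)_{\infty}=\sum_{m=1}^{\infty}\log(1-aq^m),
\]
so that differentiating with respect to $a$ and multiplying by $a$ gives
\[
a\frac{d}{da}\log (aq;q)_{\infty}=-\sum_{m=1}^{\infty}\frac{aq^m}{1-aq^m}=-L_a(q).
\]
Hence $L_a(q)=-a\,(aq;q)_{\infty}'/(aq;q)_{\infty}$, where the prime denotes $d/da$, and everything reduces to computing this logarithmic derivative from two different series expansions of $(aq;q)_{\infty}$.

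For the first representation I would use the second line of \eqref{bineqab}, which upon replacing $z$ by $aq$ yields
\[
(aq;q)_{\infty}=\sum_{n=0}^{\infty}\frac{q^{n(n+1)/2}(-a)^n}{(q;q)_n}.
\]
Differentiating this series term by term in $a$ and multiplying by $a$ produces $\sum_{n=1}^{\infty} n\,q^{n(n+1)/2}(-a)^n/(q;q)_n$; dividing by $(aq;q)_{\infty}$ and inserting the overall sign gives exactly the first line of \eqref{laeq2}.

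For the second representation I would instead write $L_a(q)=a\frac{d}{da}\log\big(1/(aq;q)_{\infty}\big)$ and expand the reciprocal by the first line of \eqref{bineqab} with $z=aq$, namely $1/(aq;q)_{\infty}=\sum_{n=0}^{\infty}a^nq^n/(q;q)_n$. Writing $H(a)$ for this sum, we have $L_a(q)=aH'(a)/H(a)$; since $aH'(a)=\sum_{n=1}^{\infty} n\,a^nq^n/(q;q)_n$ and $1/H(a)=(aq;q)_{\infty}$, this is precisely the second line of \eqref{laeq2}.

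The only point needing care is the justification of term-by-term differentiation and of the logarithmic-differentiation step, but this is routine: for $|q|<1$ and $a$ restricted to a compact set avoiding the poles $a=q^{-m}$, both the product $(aq;q)_{\infty}$ and the two power series converge uniformly on compact subsets, so differentiation under the sum and under the product is valid. I expect the only real obstacle to be the bookkeeping of the index shift from $n$ to $n-1$ when differentiating and then restoring it after multiplying by $a$; the entire analytic content is contained in the elementary identity $L_a(q)=-a\,\frac{d}{da}\log(aq;q)_{\infty}$.
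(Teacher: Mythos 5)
Your proof is correct, but it takes a genuinely different route from the paper's. The paper never differentiates in the variable $a$: it identifies $-L_a(q)$ with the limit
\[
\lim_{y \to 1}\frac{1}{1-y}\left( \frac{(qa/y;q)_{\infty}}{(q a;q)_{\infty}} -1\right)
\]
by specializing the WP-Bailey identity \eqref{wpeq} (letting $k\to 0$, $z\to\infty$ to get \eqref{peq}, then $y\to 1$) and invoking the Lambert-series representation of $f_2(a,q)$ from Corollary \ref{c4}; it then evaluates that limit twice, expanding the products via the two cases of \eqref{bineqab} and applying L'Hospital's rule termwise. You instead observe directly that $L_a(q)=-a\,\frac{d}{da}\log(aq;q)_{\infty}$ by elementary logarithmic differentiation of the product, and then compute this logarithmic derivative from the same two $q$-binomial expansions. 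The two arguments are analytically equivalent --- the paper's difference quotient in $y$, with the $y$-dependence entering through $a/y$, is precisely the $a$-derivative you compute, so the same termwise differentiations occur in both --- but yours is self-contained and more elementary, bypassing the Bailey-pair machinery entirely; indeed it is the same device the paper uses to prove Lemma \ref{l2}, applied now in the variable $a$. What the paper's longer route buys is the embedding of the corollary into the Bailey-pair framework that is the paper's main thread: it exhibits the two series as arising from the same $y\to 1$ limiting process that produces Theorem \ref{t2} and Corollary \ref{c4}, which is why the result is stated there as a corollary rather than as a standalone identity. Your convergence remarks (uniform convergence on compact sets avoiding $a=q^{-m}$, justifying termwise differentiation) are the right ones and suffice.
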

\begin{proof}
Let $k \to 0$ and $z \to \infty$ in \eqref{wpeq}, and rearrange to get
\begin{multline}\label{peq}
\sum_{n=1}^{\infty} ( y q;q)_{n-1}
\left( \frac{-a}{y}\right )^{n}q^{n(n+1)/2} \beta_n -\\
\frac{(q a/y;q)_{\infty}} {(q a;q)_{\infty}} \sum_{n=1}^{\infty}\frac{(yq;q)_{n-1}}{(q a/y ;q)_n}\left (\frac{-a}{y }\right)^{n}q^{n(n+1)/2}\alpha_n\\=
\frac{1}{1-y}\left( \frac{(qa/y;q)_{\infty}} {(q a;q)_{\infty}} -1\right),
\end{multline}
where $(\alpha_n,\beta_n)$ is a Bailey pair with respect to $a$. The result of letting $y\to 1$ on the left side of \eqref{peq} is the left side of \eqref{wpeq11}, and hence equals
$-L_a(q)$, from the final representation of $f_2(a,q)$ in Corollary \ref{c4}.

Thus \eqref{peq} now gives that
\begin{align*}
L_a(q)&=-\lim_{y \to 1}\frac{1}{1-y}\frac{\left( (q a/y;q)_{\infty}-(q a;q)_{\infty}\right)}{(q a;q)_{\infty}} \\
&=- \lim_{y \to 1}\frac{(q a/y;q)_{\infty}}{1-y}\left( \frac{1}{(q a;q)_{\infty}} -\frac{1}{(q a/y;q)_{\infty}}\right).
\end{align*}

In the first case we use the second identity at \eqref{bineqab} to get that
\begin{align*}
L_a(q)&=-\lim_{y \to 1}\frac{1}{1-y}\frac{\left( (q a/y;q)_{\infty}-(q a;q)_{\infty}\right)}{(q a;q)_{\infty}} \\
&=- \frac{1}{(q a;q)_{\infty}}\sum_{n=1}^{\infty}\frac{q^{n(n-1)/2}}{(q;q)_n} \lim_{y \to 1}\frac{(-qa/y)^n-(-qa)^n}{1-y},\\
&=\frac{-1}{(aq;q)_{\infty}}\sum_{n=1}^{\infty}\frac{n (-a)^n q^{n(n+1)/2}}{(q;q)_n},
\end{align*}
by L'Hospital's rule. The proof for the other representation follows similarly, using the first identity at \eqref{bineqab}.
\end{proof}

These expressions for $L_a(q)$ may also be used to write any theta product that is expressible in terms of such Lambert series in terms of $q$-series similar to those in Corollary \ref{claq2}. Recall that $\phi(q)$ is defined at \eqref{thetaeq}.

\begin{corollary}
\begin{align}
\frac{\theta^3(-q)}{\theta(-q)}&=1-\frac{6}{(-q;q^3)_{\infty}}\sum_{n=1}^{\infty} \frac{n q^{(3n^2+n)/2}}{(q^3;q^3)_n}+\frac{6}{(-q^2;q^3)_{\infty}}\sum_{n=1}^{\infty} \frac{n q^{(3n^2-n)/2}}{(q^3;q^3)_n},\\
&=1+6(-q;q^3)_{\infty}\sum_{n=1}^{\infty} \frac{n (-1)^n q^{n}}{(q^3;q^3)_n}-6(-q^2;q^3)_{\infty}\sum_{n=1}^{\infty} \frac{n (-1)^n q^{2n}}{(q^3;q^3)_n}. \notag
\end{align}
\end{corollary}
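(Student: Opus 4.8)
The plan is to reproduce, for this theta quotient, the exact mechanism already used in the proofs of Corollaries~\ref{corlam1} and~\ref{corlam2}: first express the left-hand side as a combination of Lambert series $L_a(q^3)$ of the kind studied in Section~4, and then replace each such $L_a$ by the two $q$-series to which it is equal in Corollary~\ref{claq2}. The two possible substitutions then yield, respectively, the two representations asserted in the statement.

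The first and only substantive step is to pin down the Lambert-series form of the left-hand side. Reading $\theta$ as the function $\phi$ of \eqref{thetaeq}, so that the quotient is $\phi^3(-q)/\phi(-q^3)$, I would establish
\[
\frac{\phi^3(-q)}{\phi(-q^3)}=1-6\sum_{n=1}^{\infty}\frac{q^{3n-2}}{1+q^{3n-2}}+6\sum_{n=1}^{\infty}\frac{q^{3n-1}}{1+q^{3n-1}}=1+6L_{-q^{-2}}(q^3)-6L_{-q^{-1}}(q^3).
\]
This is the precise analogue of the input \eqref{corlameq2} used for $a(q)$, and like that identity it is classical (it appears in Berndt's account of Ramanujan's work). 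If one prefers to extract it from material already in hand, it follows from \eqref{corlameq2} together with the theta relation $\phi^3(-q)/\phi(-q^3)=2a(q^2)-a(q)$ and the elementary Lambert identity $L_x(q)+L_{-x}(q)=2L_{x^2}(q^2)$ used in the proof of Lemma~\ref{l3}: writing $a(q^2)$ from \eqref{corlameq2} with $q\mapsto q^2$ and then substituting $L_{-q^{-2}}(q^3)=2L_{q^{-4}}(q^6)-L_{q^{-2}}(q^3)$ and $L_{-q^{-1}}(q^3)=2L_{q^{-2}}(q^6)-L_{q^{-1}}(q^3)$ reconciles the two sides term by term.

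With this representation secured, the remainder is bookkeeping. Applying the first representation of $L_a$ in Corollary~\ref{claq2} with $q$ replaced by $q^3$ and then $a=-q^{-2}$ and $a=-q^{-1}$ in turn, the product $(aq^3;q^3)_\infty$ collapses to $(-q;q^3)_\infty$ and to $(-q^2;q^3)_\infty$, the factor $(-a)^n$ becomes $q^{-2n}$ and $q^{-n}$, and the Gaussian term $q^{3n(n+1)/2}$ combines to $q^{(3n^2-n)/2}$ and to $q^{(3n^2+n)/2}$ respectively; inserting these into $1+6L_{-q^{-2}}(q^3)-6L_{-q^{-1}}(q^3)$ produces the first representation. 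Applying instead the second representation of $L_a$ with the same specializations turns $a^nq^{3n}$ into $(-1)^nq^n$ and $(-1)^nq^{2n}$ and places $(-q;q^3)_\infty$ and $(-q^2;q^3)_\infty$ in the numerators, producing the second representation.

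I expect the single genuine obstacle to be the first step, namely justifying the Lambert-series evaluation of $\phi^3(-q)/\phi(-q^3)$; once that classical identity is granted, everything afterward is a direct specialization of Corollary~\ref{claq2}. I would also verify that these specializations respect the ``suitable convergence conditions'': with $a=-q^{-2}$ one has $aq^3=-q$, and the summands reduce to $q^{(3n^2-n)/2}/(q^3;q^3)_n$ and $(-1)^nq^n/(q^3;q^3)_n$, each absolutely convergent for $|q|<1$, so in fact no real restriction is imposed.
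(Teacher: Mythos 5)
Your proposal follows the paper's own proof essentially verbatim: the paper cites Ramanujan's Lambert-series identity $\phi^3(-q)/\phi(-q^3)=1-6\sum_{n\geq 1}q^{3n-2}/(1+q^{3n-2})+6\sum_{n\geq 1}q^{3n-1}/(1+q^{3n-1})$ (Entry 18.2.16 of the Lost Notebook) and then invokes Corollary \ref{claq2} with $q$ replaced by $q^3$ and $a$ taking the values $-1/q^2$ and $-1/q$, exactly as you do, so your optional rederivation of that classical input from \eqref{corlameq2} together with $L_x(q)+L_{-x}(q)=2L_{x^2}(q^2)$ is a harmless extra. One further point: your bookkeeping, which pairs $(-q;q^3)_\infty$ with the exponent $(3n^2-n)/2$ and $(-q^2;q^3)_\infty$ with $(3n^2+n)/2$, is the correct one --- the first displayed line of the corollary as printed has these two exponents transposed (a typo, visible from the fact that the printed right side begins $1+6q-\cdots$ while $\phi^3(-q)/\phi(-q^3)=1-6q+12q^2-\cdots$), and likewise the printed left side $\theta^3(-q)/\theta(-q)$ should read $\phi^3(-q)/\phi(-q^3)$, as you correctly inferred.
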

\begin{proof}
By \textbf{Entry 18.2.16 (formula (1.21), p.353; formula (3.51),}\\
\textbf{p.357)} in Ramanujan's Lost Notebook (see \cite[page 405]{AB05}),
\begin{equation}\label{thetaeq2}
\frac{\phi^3(-q)}{\phi(-q^3)}=1-6\sum_{n=1}^{\infty}\frac{q^{3n-2}}{1+q^{3n-2}}
+6\sum_{n=1}^{\infty}\frac{q^{3n-1}}{1+q^{3n-1}}.
\end{equation}
The proofs now follow as a consequence of Corollary \ref{claq2}, with $q$ replaced with $q^3$, and $a$ taking, in turn, the values $-1/q^2$ and $-1/q$.
\end{proof}

Remark: Ramanujan gives a number of other examples of theta products
expressible as sums of Lambert series of the types considered in the
present paper. The methods of the present paper could also be
applied to those theta products, but we refrain from further
examples, leaving these for the reader's own entertainment.

 \allowdisplaybreaks{

}

\begin{thebibliography}{99}

%\bibitem{A74}G. E. Andrews,
%\emph{Problem 74-12}. SIAM Review. \textbf{16} (1974) 390.





\bibitem{A01}
Andrews G. E., Bailey's transform, lemma, chains and tree,
Special functions 2000: current perspective and future directions
(Tempe, AZ), 1--22, NATO Sci. Ser. II Math. Phys. Chem., 2001,
30, Kluwer Acad. Publ., Dordrecht




\bibitem{AB02}
Andrews G.E., Berkovich A., The WP-Bailey tree and its
implications,
 J. London Math. Soc.(2), 2002,  66 , no. 3, 529--549.


\bibitem{AB05}
Andrews G.E.,    Berndt B.C.,
Ramanujan's Lost Notebook, Part I, Springer, 2005.

\bibitem{ALL01}
Andrews G.E., Lewis R., Liu Z.G., An identity relating a
theta function to a sum of Lambert series,  Bull. London Math. Soc., 2001,
33, no. 1, 25--31.

%\bibitem{B47}
%Bailey, W. N., \emph{Some Identities in Combinatory Analysis.} Proc.
%London Math. Soc. \textbf{49}  (1947) 421--435.



%\bibitem{B49}
%Bailey, W. N., \emph{Identities of the Rogers-Ramanujan type.} Proc.
%London Math. Soc., \textbf{50} (1949) 1--10.

\bibitem{B91}
Berndt B. C., Ramanujan's Notebooks, Part III,
Springer-Verlag, New York, 1991

\bibitem{B98}
Berndt B. C., Ramanujan's Notebooks, Part V,
Springer-Verlag, New York, 1998

\bibitem{BB91}
Borwein J. M., Borwein P. B., A cubic counterpart of
Jacobi's identity and the AGM. Trans. Amer. Math. Soc., 1991, 323
 no. 2, 691--701

%\bibitem{B81}
%Bressoud, David,
%\emph{The Rogers--Ramanujan identities. (solution to problem
%74-12)}. SIAM Review. \textbf{23} (1981) 101--104.

\bibitem{B81a}
Bressoud D.,  Some identities for terminating $q$-series,
Math. Proc. Cambridge Philos. Soc.,  1981, 89, no. 2,
211--223

%\bibitem{BIS00}
%Bressoud, D.; Ismail, M. E. H.; Stanton, D. Change of base in Bailey
%pairs. (English summary) Ramanujan J. 4 (2000), no. 4, 435--453.

\bibitem{GR04}
Gasper G., Rahman M., Basic hypergeometric series. With a
foreword by Richard Askey. Second edition. Encyclopedia of
Mathematics and its Applications, 96. Cambridge University Press,
Cambridge, 2004

%\bibitem{J21}
%Jackson F.H., \emph{Summation of q-hypergeometric series}, Messenger
%of Math. \textbf{57} (1921), 101-112.

\bibitem{LM08}
Liu Q., Ma X.,  On the Characteristic Equation of Well-Poised
Baily Chains, Ramanujan J.,  2009, 18, no. 3, 351--370


\bibitem{MSZ09}
Mc Laughlin J., Sills A. V., Zimmer P., Some implications of
Chu's $_{10}\psi_{10}$ extension of Bailey's $_{6}\psi_{6}$
summation formula - submitted

%\bibitem{MZ07}
%Mc Laughlin, James; Zimmer, Peter. \emph{Some Applications of a
%Bailey-type Transformation} - submitted

%\bibitem{MZ07b}
%Mc Laughlin, James; Zimmer, Peter. \emph{Some transformations for
%terminating basic hypergeometric series} - submitted


\bibitem{MZ09}
Mc Laughlin J., Zimmer P., General WP-Bailey Chains, Ramanujan J., (in press), DOI: 10.1007/s11139-010-9220-y


\bibitem{MZ09b}
Mc Laughlin J., Zimmer P., Some Implications of the WP-Bailey
Tree,  Adv. in Appl. Math., 2009,  43, no. 2,
162-175


%\bibitem{S03}
%A. V. Sills,
 %\emph{Finite Rogers-Ramanujan type identities.}
%Electronic J. Combin. \textbf{10(1)} (2003) \#R13, 1-122.



\bibitem{S94}
Singh U. B., A note on a transformation of Bailey,
 Quart. J. Math. Oxford Ser. (2),  1994, 45, no. 177, 111--116

\bibitem{S51}
Slater L. J., A new proof of Rogers's transformations of
infinite series, Proc. London Math. Soc. (2), 1951, 53,  460--475



%\bibitem{S52}
%Slater, L. J.
% \emph{Further identities of the Rogers-Ramanujan
%type},
% Proc. London Math.Soc. \textbf{54} (1952) 147--167.

\bibitem{S02}
 Spiridonov V. P.,
An elliptic incarnation of the Bailey chain, Int. Math. Res.
Not., 2002, 37, 1945--1977

\bibitem{W36}
Watson G. N., The Final Problem: An Account of the Mock Theta
Functions, J. London Math. Soc., 1936, 11, 55-80

\bibitem{W03}
Warnaar S. O., Extensions of the well-poised and elliptic
well-poised Bailey lemma, Indag. Math. (N.S.), 2003, 14,
no. 3-4, 571--588


\end{thebibliography}
\end{document}